\newtheorem{theorem}{Theorem}[section]
\newtheorem{proposition}[theorem]{Proposition}
\newtheorem{definition}[theorem]{Definition}
\newtheorem{remark}[theorem]{Remark}
\def\section{\@startsection {section}{1}{\z@}{3.25ex plus 1ex minus
		.2ex}{1.5ex plus .2ex}{\large\bf}}
\def\subsection{\@startsection{subsection}{2}{\z@}{3.25ex plus 1ex minus
		.2ex}{1.5ex plus .2ex}{\normalsize\bf}}
\newsavebox{\@brx}
\newcommand{\llangle}[1][]{\savebox{\@brx}{\(\m@th{#1\langle}\)}%
	\mathopen{\copy\@brx\kern-0.5\wd\@brx\usebox{\@brx}}}
\newcommand{\rrangle}[1][]{\savebox{\@brx}{\(\m@th{#1\rangle}\)}%
	\mathclose{\copy\@brx\kern-0.5\wd\@brx\usebox{\@brx}}}
\title{Feynman-Kac formula for the heat equation driven by time-homogeneous white noise potential. }
\author{ Ramiro Scorolli\thanks{Dipartimento di Scienze Statistiche Paolo Fortunati, Università di Bologna, Bologna, Italy. \textbf{e-mail}: ramiro.scorolli2@unibo.it}}
\date{\today}
\begin{document}
	
	\maketitle
	
	\bigskip

\begin{abstract}
	We present a Feynman-Kac formula for the $1$-dimensional stochastic heat equation (SHE) driven by a time-homogeneous Gaussian white noise potential, where the noise is interpreted in the Wick-It\^o-Skorokhod sense.  
	Our approach consist in constructing a Wong-Zakai-type approximation for the SHE from which we are able to obtain an ``approximating Feynman-Kac'' representation via the reduction of the approximated SHE to a deterministic partial differential equation (PDE). Then we will show that those ``approximating Feynman-Kac'' converge to a well defined object we will call ``formal Feynman-Kac'' representation which happens to coincide with the unique solution of SHE.
\end{abstract}

Key words and phrases: Stochastic heat equation, Feynman-Kac, Parabolic Anderson Model, Wick product, Wiener chaos expansion.\\

AMS 2000 classification: 60H10; 60H30; 60H05.

\bigskip	
	
\section{Introduction.}
	
In this work we will deal with the $1$-dimensional stochastic heat equation
	\begin{align}\label{SHE}
		\begin{cases}
			\partial_t u(t,x)=\frac 1 2 \partial_{xx}^2 u(t,x)+u(t,x)\diamond \frac{d}{dx}W_x,\; (t,x)\in [0,T]\times\mathbb R\\
			u(0,x)=u_0(x),
		\end{cases}
	\end{align}
driven by the (distributional) derivative of the Brownian motion  $\{W_x\}_{x\in\mathbb R}$. From now on, in a slight abuse of notation we will denote $\dot W_x=\frac{d}{dx}W_x$.

The initial condition $u_0:\mathbb R\to\mathbb R$ is assumed to be a bounded, deterministic Borel-measurable function.
In (\ref{SHE}) the symbol ``$\diamond$'' denotes the Wick product (e.g. \cite{holden2009stochastic} or \cite{gjessing2020wick} ) and implies that the noise is interpreted in the It\^o-Skorohod sense.

In the case of space-time white noise potential equation (\ref{SHE}) has been extensively studied (e.g. \cite{potthoff1998generalized},\cite{hu2009stochastic},\cite{hu2011feynman},\cite{bertini1995stochastic} and references therein). On the other hand if the noise is assumed to be space-homogeneous white noise the equation (\ref{SHE}) is just a particular case of the well known Zakai equation from non-linear filtering theory (see for instance the original paper \cite{zakai1969optimal} or the review \cite{heunis1990stochastic} and references therein).

Nevertheless the case of space-only white noise hasn't received the same attention. In \cite{uemura1996construction} the author has showed that in this one-dimensional setting, equation (\ref{SHE}) admits a unique weak-solution which is square integrable for any $(t,x)\in[0,T]\times\mathbb R$; such a solution is constructed employing the Wiener Chaos expansion (see also Theorem $3.9$ of \cite{hu2015stochastic} ).

In \cite{hu2001heat}\cite{hu2002chaos} the author treated the SHE with time-homogeneous noise in the $d$-dimensional case, showing existence, and providing numerous estimations of the Lyapunov exponentials of the solutions. The former treats the case of fractional Brownian motion, while in the latter the solution is showed to exist in a \textit{flat} Hilbert space similar to those introduced by Kondratiev (see for instance \cite{holden2009stochastic}).

In \cite{hu2015stochastic}  the authors studied, among other things, the existence and regularity of the multidimensional version of (\ref{SHE}) when the covariance structure of $\{W_x\}_{x\in\mathbb R^d}$ satisfies certain conditions. They also propose some formal Feynman-Kac representations for the solutions of the SHE with space-time, and time-homogeneous Gaussian noise both in the Skorohod and Stratonovich sense. Nevertheless no representation was proposed for the solution of (\ref{SHE}).

In \cite{kim2017time} and \cite{kim2019stochastic} the authors study the SHE with time-homogeneous white noise potential in a bounded interval using the concept of Wiener chaos solution and propagator introduced in \cite{mikulevicius1993separation} (see also \cite{lototsky1997nonlinear}). They obtain estimation of the regularity of the solutions and well as existence and uniqueness results.

 The aim of this article is to construct a Feynman-Kac representation for the unique solution of (\ref{SHE}), this hasn't been achieved in  \cite{hu2015stochastic} due to the great generality under which the authors analyze the problem.  In this simpler framework we were able to use some of the techniques proposed by them to construct such a representation. 
 
 Our analysis relies on the use of a ``Wong-Zakai-type" approximation of equation (\ref{SHE}) where we replace the singular white noise $\{\dot W_x\}$ with a truncated Karuhnen-Loève-like series. 
 
 As in  \cite{lanconelli2021wong} this ``approximating equation'' is reduced to a deterministic partial differential equation for which we are able to derive a Feynman-Kac representation. Then we show that as the number of terms in the truncated series goes to infinity this representation converges to a well-defined random variable (for fixed $t$ and $x$) and that this limit-object is the unique solution of (\ref{SHE}) present in the literature. It's worth noticing that due to the structure of the approximated noise the sequence of ``approximated solutions'' converges not only in the $\mathbb L^p$ norm for any $p\in[1,\infty)$, but also almost surely.
 
\section{Preliminaries}
	
Let $(\Omega,\mathfrak B,\mathbb P^W)$ be a complete probability space which carries a one dimensional Brownian motion $\{W_x\}_{x\in\mathbb R}$ indexed by the real line.
Then consider the following Gaussian Hilbert space
\begin{align*}
\mathfrak H(W):=\bigg\{\int_{\mathbb R}h(x) d W_x; h\in \mathbb \mathbb L^2(\mathbb R)\bigg\},
\end{align*}
where the stochastic integral over the real line is defined in \cite{janson1997gaussian} (Chapter 7, section 2).
From now on we will let $\mathfrak B=\sigma(\mathfrak H(W))$, i.e. the sigma algebra generated by the Gaussian Hilbert space $\mathfrak H(W)$ then we have that the family of ``stochastic exponentials'' (also known as Wick exponentials)
\begin{align*}
\bigg\{\mathcal E^f:=\exp\left(\int_{\mathbb R}f(x)dW_x-\frac 1 2 \int_{\mathbb R}|f(x)|^2dx\right), f\in\mathbb L^2(\mathbb R)\bigg\},
\end{align*}
is total in $\mathbb L^2(\Omega,\mathcal B,\mathbb P^W)$ ($\mathbb L^2(P)$ for short). According to the Wiener chaos decomposition, any random variable  $X\in \mathbb L^2(P)$ can be represented as:
\begin{align*}
X=\sum_{n=0}^{\infty} I_n(h_n),\; \text{convergence in } \mathbb L^2(\mathbb P^W)
\end{align*} 
where $I_n(\bullet)$ denotes the $n$-th multiple stochastic integral (e.g. \cite{janson1997gaussian} Theorem 7.26), and the kernels $h_n\in \mathbb \mathbb L^2(\mathbb R^n)$ are symmetric deterministic functions.

If $A:\mathbb L^2(\mathbb P^W)\to \mathbb L^2(\mathbb P^W)$ is a bounded linear operator and we assume that $A$ is a contraction , then its ``second quantization operator'' $\Gamma(A):\mathbb L^2(\mathbb P^W)\to \mathbb L^2(\mathbb P^W)$ is given by the action:
\begin{align*}
\Gamma(A)X=\sum_{n=0}^{\infty} I_n(A^{\otimes n}h_n),
\end{align*}
notice that $A$ being a contraction is a sufficient condition for $\Gamma(A)$ to map $\mathbb L^2(\mathbb P^W)$ into itself.

In the following we will also need an complete orthonormal system (CONS for short) of the Hilbert space $\mathbb L^2(\mathbb R)$. In particular we will use the family of \textit{Hermite functions}, $\{e_j\}_{j\in\mathbb N}$ that are defined by:
\begin{align}
e_j(x)=(-1)^{j-1}\left(\sqrt{\pi}2^{j-1} (j-1)!\right)^{-1/2}e^{-x^2/2}\left(\frac{d}{dx}\right)^{j-1}e^{-x^2}, j\in\mathbb N
\end{align}
where we used a different indexing in order to avoid having a $0$-order element.

It's straightforward to see that taking tensor products we obtain a CONS for the space $\mathbb \mathbb L^2(\mathbb R^n)$, i.e.  $\{e_{i_1}\otimes\cdots\otimes e_{i_n}\}_{(i_1,...,i_n)\in\mathbb N^n}$.

 One could show that the family $\{e_j\}_{j\in\mathbb N}$ belongs to the Schwartz space of rapidly decreasing functions $S(\mathbb R)$, and this is particularly useful since this implies that we can grab any element of its dual space $S'(\mathbb R)$ (space of tempered distributions) and expand it into series of Hermite functions (e.g. \cite{picard1991hilbert}). In particular we will work with the Dirac's delta function $\delta_x\in S'(\mathbb R), x\in\mathbb R$, that can be written as:
\begin{align}\label{diracdelta}
\delta_x(\bullet)=\sum_{n=0}^{\infty} e_j(x)e_j(\bullet),
\end{align}
where the series clearly diverges in the classical sense. 

We will now briefly discuss a particular type of product between random variables; let $X, Y\in \mathbb L^2(\mathbb P^W)$ with Wiener chaos expansions given by 
\begin{align*}
		X=\sum_{n=0}^{\infty} I_n(h_n),\;\;\; 		Y=\sum_{n=0}^{\infty} I_n(g_n)
	\end{align*} 
the new element
\begin{align*}
X\diamond Y=\sum_{n=0}^{\infty} I_n(k_n),\;\; k_n:=\sum_{j=1}^{n} h_j\odot g_{n-j},
\end{align*}
where $``\odot "$ denotes the symmetric tensor product is called the Wick product of $X$ and $Y$. It's  worth noticing that $\mathbb L^2(\mathbb P^W)$ is not closed under Wick multiplication. This product has great relevance in several areas of stochastic analysis and quantum physics, the interested reader is referred to \cite{holden2009stochastic} and \cite{kuo2018white} from a complete account on the Wick product.

Finally all throughout this article we will denote with $p_{t-s}(x-y)$ the heat kernel
\begin{align*}
p_{t-s}(x-y):=\frac{1}{\sqrt{2\pi (t-s)}}e^{-\frac{(x-y)^2}{2(t-s)}},
\end{align*}
and $(P_tf)(x)$ will denote the action of the heat semigroup on the function $f$, i.e.
\begin{align*}
(P_t f)(x)=\int_{\mathbb R} p_{t}(x-y)f(y)dy.
\end{align*}

\section{Construction of the approximating equation.}

From now on we will work on the probability space $(\Omega,\mathfrak B,\mathbb P^W)$ introduced in the previous section. In this section we will propose an approximation of equation (\ref{SHE}),  and thus the first thing to do is to construct an opportune smooth approximation of the singular White noise process $\{\dot W_x\}_{x\in \mathbb R}$. It's known that the singular white noise at $x\in\mathbb R$ could be formally seen as 
	\begin{align*}
\dot W_x=\int_{\mathbb R} \delta_x(y)dW_y,
	\end{align*}
i.e. the stochastic integral of a Dirac delta function with mass at $x\in\mathbb R$ (see \cite{kuo2018white}).
	
One possible approximation can be obtained by truncating the series in (\ref{diracdelta}) up to a certain finite value $K$ yielding:
	\begin{align}\label{smooth WN}
		\dot W_x^K:=\sum_{j=1}^{K}e_j(x)\int_{\mathbb R}e_j(y)dW_y,
	\end{align}
(notice that the latter is nothing more than the derivative of a truncated Karuhnen-Loève expansion of the Brownian motion $W$)
clearly $\dot W^K$ converges to $\dot W$ in some space of generalized random variables (see formula $2.3.33$ of \cite{holden2009stochastic}).

If we substitute the singular white noise in (\ref{SHE}) with (\ref{smooth WN}) we obtain the following ``approximating equation'':
	\begin{align}\label{approx SHE}
		\begin{cases}
			\partial_t u_{t,x}^K=\frac 1 2 \partial_{xx}^2 u_{t,x}^K+u_{t,x}^K\diamond \dot W_x^K,\; (t,x)\in \mathbb [0,T]\times\mathbb R\\
			u(0,x)=u_0(x).
		\end{cases}
	\end{align}
	
Since the equation above involves non-trivial operations, such as taking the Wick product between the solution and a random potential we should state what a ``solution'' of the latter actually is.
Following  \cite{hu2011feynman} we have:
\begin{definition}\label{definition weak solution}
		Let $K$ be any arbitrary positive integer, then random field $u^K:[0,T]\times\mathbb R\times\Omega\to\mathbb R$ is said to be a weak solution of (\ref{approx SHE}) if for any fixed $(t,x)\in[0,T]\times\mathbb R$ we have that  $u^K_{t,x}\in \mathbb L^2(\mathbb P^W)$ and for any random variable $F\in\mathbb D^{1,2}$ it holds that:
		\begin{align}\label{definition}
			\mathbb E\left[u^K_{t,x} F\right]&= (P_t u_0)(x)\mathbb E[F]\nonumber\\
			&+\mathbb E\left[\bigg\langle \int_0^t\int_{\mathbb R} p_{t-s}(x-y) \left(\sum_{j=1}^Ke_j(y)e_j(\bullet)\right)u_{s,y}^Kdy\; ds,D_{(\bullet)}F\bigg\rangle_{\mathbb \mathbb L^2(\mathbb R)} \right],
		\end{align}
	where $D$ denotes the Malliavin derivative and $\mathbb D^{1,2}$ the Sobolev-Malliavin space \cite{nualart2006malliavin}.
\end{definition}

If in (\ref{approx SHE}) we consider a space-independent potential $\dot W_t^K$  the change of variables $v^K(t, x) = u^K(t, x) \diamond \exp^{\diamond}\left(\int_0^t W_s^Kds\right)$ (see section $3.6$ of \cite{holden2009stochastic} for a detailed explanation) reduces (\ref{approx SHE}) to a standard homogeneous heat equation.
Unfortunately, since the potential in our case is time-homogeneous (space-dependent) this approach is not applicable.
For this reason in order to deal with equation (\ref{approx SHE}) we will employ the approach proposed  \cite{lanconelli2021wong} (see also \cite{lanconelli2021small} ). The following remark offers a brief explanation of the latter.

\begin{remark}
	From now on we will let $Z_j(\omega):=\int_{\mathbb R}e_j(y)dW_y(\omega), j\in\{1,...,K\},\; \omega\in\Omega$, then we can rewrite equation (\ref{approx SHE}) as
	\begin{align}
	\begin{cases}
		\partial_t u_{t,x}^K=\frac 1 2 \partial_{xx}^2 u_{t,x}^K+\sum_{j=1}^K  e_j(x) u_{t,x}^K\diamond Z_j,\; (t,x)\in \mathbb [0,T]\times\mathbb R\\
		u(0,x)=u_0(x).
	\end{cases}
\end{align}

	It's known (e.g. formula $(2.5)$ of \cite{hu1996wick} or Theorem $9.20$  of \cite{kuo2018white} for an alternative but equivalent formulation) that the Wick product between a random variable $X\in\mathbb D^{1,2}$  and a random variable $I(g),\, g\in\mathbb L^2(\mathbb R)$ in first Wiener Chaos is given by:
	\begin{align*}
		X\diamond I(g)=X\cdot I(g)-\langle DX,g\rangle_{\mathbb \mathbb L^2(\mathbb R)}.
	\end{align*}

	Then proceeding formally we can write:
	\begin{align*}
		u_{t,x}^K\diamond Z_j= u_{t,x}^K \cdot Z_j-\langle Du_{t,x}^K,e_j\rangle_{\mathbb \mathbb L^2(\mathbb R)},
	\end{align*}
	and thus we are left to consider the equation
	\begin{align*}
		\begin{cases}
			\partial_t u_{t,x}^K=\frac 1 2 \partial_{xx}^2 u_{t,x}^K+u_{t,x}^K \cdot Z_j-\langle Du_{t,x}^K,e_j\rangle_{\mathbb \mathbb L^2(\mathbb R)},\; (t,x)\in \mathbb [0,T]\times\mathbb R\\
			u(0,x)=u_0(x).
		\end{cases}
	\end{align*}
	
	If we assume that the solution of the equation above is of the form 
	\begin{align*}
		u_{t,x}^K(\omega)=\mathfrak u^K(t,x,Z_1(\omega),...,Z_K(\omega))
	\end{align*}
	for some continuous function $\mathfrak u:[0,T]\times\mathbb R\times\mathbb R^K\to\mathbb R$ that must be determined
	we can use the chain rule for Malliavin derivatives and obtain the following partial differential equation (PDE for short):
	\begin{align*}
		\begin{cases}
			\partial_t \mathfrak u_{t,x,z}^K=\frac 1 2 \partial_{xx}^2 \mathfrak u_{t,x,z}^K+\mathfrak u_{t,x,z}^K\left(\sum_{j=1}^{K}e_j(x)z_j\right)-\sum_{j=1}^{K}e_j(x)\partial_{z_j}\mathfrak u_{t,x,z}^K,\; (t,x,z)\in [0,T]\times\mathbb R\times\mathbb R^K\\
			\mathfrak u^K(0,x)=u_0(x).
		\end{cases}
	\end{align*}
Upon multiplying both sides of the equation by $\exp\left(-\frac{1}{2}\sum_{j=1}^{K}z_j^2\right)$ and defining $\mathfrak v_{t,x,z}^K:=\mathfrak u_{t,x,z}^K  \exp\left(-\frac{1}{2}\sum_{j=1}^{K}z_j^2\right)$, we are able to get rid of the zero-order term above and obtain the following:
	\begin{align}\label{PDE2}
		\begin{cases}
			\partial_t \mathfrak v^K=\frac{1}{2}\partial_{xx}^2 \mathfrak v^K-\sum_{j=1}^{K}e_j(x)\partial_{z_j}\mathfrak v^K,\\
			\mathfrak v^K(0,x)=u_0(x)\times \exp\left(-\frac{1}{2}\sum_{j=1}^{K}z_j^2\right).
		\end{cases}
	\end{align}
	
Formally applying the classical Feynman-Kac formula we obtain:
	\begin{align}\label{FeynmanKac PDE}
		\mathfrak v^{K}(t,x,z)=\mathbb{E}^B\left[u_0(B_t^x) \exp\bigg\{-\frac{1}{2}\sum_{j=1}^{K}\left(z_j
		-\int_0^t e_j(B_s^x)ds\right)^2\bigg\}\right],
	\end{align}
where $\{B_t\}_{t\in [0,T]}$ is a $1$-dimensional Brownian motion defined on the auxiliary filtered probability space $(\tilde{\Omega},\mathcal G,\{\mathcal G^B_t\}_{t\in[0,T]},\mathbb P^B)$ and $\mathbb E^B$ denotes the expectation in this space.

The expression given by (\ref{FeynmanKac PDE}) is sometimes referred to  as a ``generalized solution'' of (\ref{PDE2}). It is worth mentioning that the latter becomes a classical solution if suitable regularity assumptions on the coefficients of
(\ref{PDE2}) are in force. For more details see \cite{freidlin2016functional} (page 122).
By definition the latter implies that
	\begin{align*}
		\mathfrak u^{K}(t,x,z)=\mathbb{E}^B\left[u_0(B_t^x) \exp\bigg\{\sum_{j=1}^{K}z_j\left(\int_0^te_j(B_s^x)ds\right)-\frac{1}{2}\sum_{j=1}^{K}\left(\int_0^te_j(B_s^x)ds\right)^2\bigg\}\right].
	\end{align*}
	
	Letting $u^K_{t,x}(\omega):=\mathfrak u^K(t,x,Z_1(\omega),...,Z_K(\omega))$ we obtain the formula in theorem \ref{existence}.
\end{remark}

\section{Statements of the theorems.}
\begin{theorem}\label{existence}
		The random field $u^K:[0,T]\times \mathbb R\times \Omega\to\mathbb R$ defined by:
		\begin{align}\label{approx FK}
			u_{t,x}^K=\mathbb{E}^B\left[u_0(B_t^x)\times \exp\bigg\{\int_{\mathbb R}\sum_{j=1}^{K}\left(\int_0^te_j(B_s^x)ds\right) e_j(y)dW(y)- \frac{1}{2}\sum_{j=1}^{K}\left(\int_0^te_j(B_s^x)ds\right)^2\bigg\}\right],
		\end{align}
		is a weak solution of (\ref{approx SHE}) (in the sense of definition \ref{definition weak solution} )
\end{theorem}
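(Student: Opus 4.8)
The plan is to exploit the totality of the Wick exponentials. Since $\{\mathcal E^h:h\in\mathbb L^2(\mathbb R)\}$ is total in $\mathbb L^2(\mathbb P^W)$ and its linear span is dense in $\mathbb D^{1,2}$, and since both sides of (\ref{definition}) are continuous linear functionals of $F$ on $\mathbb D^{1,2}$ --- the left-hand side because $u^K_{t,x}\in\mathbb L^2(\mathbb P^W)$, the right-hand side because the $\mathbb L^2(\mathbb R)$-valued random variable $G_{t,x}:=\int_0^t\int_{\mathbb R}p_{t-s}(x-y)\big(\sum_{j=1}^K e_j(y)e_j(\bullet)\big)u^K_{s,y}\,dy\,ds$ equals, by orthonormality, $\sum_{j=1}^K c_j\, e_j(\bullet)$ with scalar coefficients $c_j:=\int_0^t\int_{\mathbb R}p_{t-s}(x-y)e_j(y)u^K_{s,y}\,dy\,ds\in\mathbb L^2(\mathbb P^W)$ and hence lies in $\mathbb L^2(\Omega;\mathbb L^2(\mathbb R))$ --- it suffices to verify (\ref{definition}) for $F=\mathcal E^h$. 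Before doing so I would record the integrability estimate underlying all of this: writing $f_{t,x,B}:=\sum_{j=1}^K\big(\int_0^t e_j(B_s^x)ds\big)e_j$ one has, from the classical uniform bound $\sup_j\|e_j\|_\infty<\infty$, that $\|f_{t,x,B}\|_{\mathbb L^2(\mathbb R)}^2=\sum_{j=1}^K\big(\int_0^t e_j(B_s^x)ds\big)^2\le C_K t^2$, so that $u^K_{t,x}=\mathbb E^B[u_0(B_t^x)\mathcal E^{f_{t,x,B}}]$ is a bona fide element of $\mathbb L^2(\mathbb P^W)$ with second moment bounded uniformly on compacts.

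For the left-hand side I would apply Fubini (legitimized by the bound above) to exchange $\mathbb E$ and $\mathbb E^B$, and then use the elementary identity $\mathbb E[\mathcal E^g\mathcal E^h]=\exp(\langle g,h\rangle_{\mathbb L^2(\mathbb R)})$. Writing $P_Kh:=\sum_{j=1}^K\langle e_j,h\rangle_{\mathbb L^2(\mathbb R)}\,e_j$ for the orthogonal projection of $h$ onto $\mathrm{span}\{e_1,\dots,e_K\}$, this yields
\begin{align*}
\mathbb E[u^K_{t,x}\mathcal E^h]=\mathbb E^B\Big[u_0(B_t^x)\exp\Big(\int_0^t (P_Kh)(B_s^x)\,ds\Big)\Big]=:w(t,x),
\end{align*}
a classical Feynman--Kac functional with the bounded potential $V:=P_Kh$. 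For the right-hand side I would use $D_{(\bullet)}\mathcal E^h=h(\bullet)\mathcal E^h$ together with $\mathbb E[\mathcal E^h]=1$ and the projection identity $\langle\sum_{j=1}^K e_j(y)e_j(\bullet),h(\bullet)\rangle_{\mathbb L^2(\mathbb R)}=(P_Kh)(y)$ to reduce the second term to $\int_0^t\int_{\mathbb R}p_{t-s}(x-y)(P_Kh)(y)\,\mathbb E[u^K_{s,y}\mathcal E^h]\,dy\,ds$. Since $\mathbb E[u^K_{s,y}\mathcal E^h]=w(s,y)$ by the very computation just performed, the right-hand side equals $(P_tu_0)(x)+\int_0^t\int_{\mathbb R}p_{t-s}(x-y)V(y)w(s,y)\,dy\,ds$.

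The proof then closes by recognizing that this is exactly the mild (Duhamel) form of the Feynman--Kac functional $w$, namely
\begin{align*}
w(t,x)=(P_tu_0)(x)+\int_0^t\int_{\mathbb R}p_{t-s}(x-y)V(y)w(s,y)\,dy\,ds,
\end{align*}
valid for any bounded potential $V$ and bounded $u_0$. I would establish it self-containedly from the pathwise identity $\exp(\int_0^t V(B_r)dr)=1+\int_0^t V(B_s)\exp(\int_s^t V(B_r)dr)\,ds$: taking $\mathbb E^B$, conditioning on the Brownian path up to time $s$, and invoking the Markov property turns the inner conditional expectation into $w(t-s,B_s)$, after which Fubini and the substitution $s\mapsto t-s$ give the claim (alternatively one cites \cite{freidlin2016functional}). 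Comparing with $\mathbb E[u^K_{t,x}\mathcal E^h]=w(t,x)$ finishes the verification for $F=\mathcal E^h$ and hence, by density and continuity, for all $F\in\mathbb D^{1,2}$. The genuinely delicate point --- and the step I expect to require the most care --- is the bookkeeping that legitimizes everything: the uniform-in-$(s,y)$ $\mathbb L^2$ bounds on $u^K_{s,y}$ needed to place $G_{t,x}$ in $\mathbb L^2(\Omega;\mathbb L^2(\mathbb R))$ and to justify Fubini, together with the fact that $\mathrm{span}\{\mathcal E^h\}$ is dense in the graph norm of $\mathbb D^{1,2}$ (not merely in $\mathbb L^2(\mathbb P^W)$), so that equality on this total set propagates to all of $\mathbb D^{1,2}$.
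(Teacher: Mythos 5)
Your proposal is correct and follows essentially the same route as the paper: test against Wick exponentials $\mathcal E^h$, recognize $\mathbb E[u^K_{t,x}\mathcal E^h]$ as a classical Feynman--Kac functional with the projected potential $P_Kh$, match it against the mild (Duhamel) form, reduce the Malliavin term via $D_{(\bullet)}\mathcal E^h=h(\bullet)\mathcal E^h$, and conclude by density of the exponentials in $\mathbb D^{1,2}$. The only differences are cosmetic: you derive the Duhamel identity pathwise rather than citing the classical Feynman--Kac PDE and its mild formulation, and you make explicit the integrability and density bookkeeping that the paper leaves implicit.
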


\begin{theorem}\label{Feynman-Kac functional}
		The family of random variables $\{\Psi_{t,x}^K;\; K\in\mathbb N\}$ given by:
		\begin{align*}
			\Psi_{t,x}^K:=\int_{\mathbb R}\sum_{j=1}^{K}\left(\int_0^te_j(B_s^x)ds\right) e_j(y)dW(y)-\frac{1}{2}\sum_{j=1}^{K}\left(\int_0^te_j(B_s^x)ds\right)^2,
		\end{align*}
		converges in $\mathbb L^2(\mathbb P^W\otimes \mathbb P^B)$ to a well defined random variable denoted by 
		\begin{align*}
			\Psi_{t,x}=\int_0^t\int_{\mathbb R} \delta(B_s^x-y)\; d W(y)\;ds-\frac{1}{2}\int_{\mathbb R}L_a(t)^2da.
		\end{align*}
	Furthermore conditional on $\mathcal G_T^B$ it holds that 
	$\Psi_{t,x}\sim N\left(-\frac{1}{2}\int_{\mathbb R}|L_a(t)|^2da,\int_{\mathbb R}|L_a(t)|^2da\right)$,
	where $\{L_a(t); (t,a)\in[0,T]\times\mathbb R\}$ is the local time of $\{B_t\}_{t\in [0,T]}$.
\end{theorem}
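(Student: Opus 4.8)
The plan is to split $\Psi_{t,x}^K$ into its first–chaos (stochastic integral) part and its quadratic correction, to recognize both as being governed by the Hermite coefficients of the Brownian local time, and to pass to the limit using the integrability of the self-intersection local time. Throughout, write $g_j:=\int_0^t e_j(B_s^x)\,ds$, so that the two pieces of $\Psi_{t,x}^K$ are $I_1^W\!\big(\sum_{j=1}^K g_j e_j\big)$ and $-\tfrac12\sum_{j=1}^K g_j^2$, where $I_1^W$ denotes the first Wiener integral with respect to $W$. The starting observation is that, by the occupation-times formula, $g_j=\int_{\mathbb R}e_j(a)L_a(t)\,da=\langle e_j,L_\bullet(t)\rangle_{\mathbb L^2(\mathbb R)}$, so that (once $L_\bullet(t)\in\mathbb L^2(\mathbb R)$ is known) the $g_j$ are exactly the Hermite coefficients of the local time $L_\bullet(t)$ of $B^x$; this is the rigorous counterpart of the Dirac expansion (\ref{diracdelta}). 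By Parseval this gives $\sum_{j\ge1}g_j^2=\int_{\mathbb R}L_a(t)^2\,da$, which identifies the limiting quadratic term and pins down the candidate limit $\Psi_{t,x}=I_1^W(L_\bullet(t))-\tfrac12\int_{\mathbb R}L_a(t)^2\,da$, the first term being the rigorous reading of the formal expression $\int_0^t\!\int_{\mathbb R}\delta(B_s^x-y)\,dW(y)\,ds$.

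The first and main step is the integrability input
\begin{equation*}
\mathbb E^B\!\left[\int_{\mathbb R}L_a(t)^2\,da\right]<\infty
\qquad\text{and}\qquad
\mathbb E^B\!\left[\Big(\int_{\mathbb R}L_a(t)^2\,da\Big)^2\right]<\infty .
\end{equation*}
By the occupation formula $\int_{\mathbb R}L_a(t)^2\,da=\int_0^t\!\int_0^t \delta(B_r^x-B_s^x)\,dr\,ds$ is the self-intersection local time of $B^x$, so by Tonelli and the Gaussian density the first moment equals $\int_0^t\!\int_0^t p_{|r-s|}(0)\,dr\,ds<\infty$, the singularity $|r-s|^{-1/2}$ being integrable. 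The second moment is handled by the analogous four-fold time integral, whose finiteness for one–dimensional Brownian motion is classical (see \cite{le1994exponential}, where even exponential moments are obtained). This is the technical heart of the argument, and it also legitimizes the Hermite-coefficient identification by confirming $L_\bullet(t)\in\mathbb L^2(\mathbb R)$ almost surely.

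With these bounds the two convergences are routine. For the stochastic part, conditioning on $\mathcal G^B_T$ and using the It\^o isometry gives $\mathbb E^W\big|I_1^W(\sum_{j\le K} g_j e_j)-I_1^W(L_\bullet(t))\big|^2=\sum_{j>K} g_j^2$; taking $\mathbb E^B$ and invoking monotone convergence together with the first-moment bound shows this tends to $0$, i.e. convergence in $\mathbb L^2(\mathbb P^W\otimes\mathbb P^B)$. For the quadratic part, $\tfrac12\sum_{j\le K}g_j^2$ increases pointwise to $\tfrac12\int_{\mathbb R}L_a(t)^2\,da$ and is dominated by it, so the second-moment bound yields convergence in $\mathbb L^2(\mathbb P^B)$ by dominated convergence. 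Since the stochastic part is centered given $\mathcal G^B_T$ while the quadratic part is $\mathcal G^B_T$-measurable, the two limits add and $\Psi_{t,x}^K\to\Psi_{t,x}$ in $\mathbb L^2(\mathbb P^W\otimes\mathbb P^B)$.

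Finally, the conditional law is immediate from this decomposition: given $\mathcal G^B_T$ the function $L_\bullet(t)$ is a fixed element of $\mathbb L^2(\mathbb R)$, so $I_1^W(L_\bullet(t))=\int_{\mathbb R}L_a(t)\,dW(a)$ is a centered Gaussian with variance $\int_{\mathbb R}L_a(t)^2\,da$, while $-\tfrac12\int_{\mathbb R}L_a(t)^2\,da$ is $\mathcal G^B_T$-measurable; hence $\Psi_{t,x}\mid\mathcal G^B_T\sim N\big(-\tfrac12\int_{\mathbb R}L_a(t)^2\,da,\ \int_{\mathbb R}L_a(t)^2\,da\big)$. The anticipated main obstacle is the integrability step — establishing finiteness of the first and second moments of the self-intersection local time — since everything else reduces to an It\^o isometry and a dominated/monotone convergence argument once $L_\bullet(t)\in\mathbb L^2(\mathbb R)$ is secured.
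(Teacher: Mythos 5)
Your proposal is correct and follows essentially the same route as the paper: both arguments reduce everything to the identity $\int_0^t e_j(B_s^x)\,ds=\langle e_j,L_\bullet(t)\rangle_{\mathbb L^2(\mathbb R)}$ and the Parseval bound $\sum_j g_j^2=\int_{\mathbb R}L_a(t)^2\,da=\alpha_t$, then control the tail by dominated convergence using integrability of $\alpha_t$ (the paper invokes its exponential integrability from Le Gall, where you only need the first and second moments), and obtain the conditional Gaussian law from the same decomposition. The only cosmetic difference is that you prove convergence directly to the explicitly identified limit $I_1^W(L_\bullet(t))-\tfrac12\alpha_t$, whereas the paper verifies the Cauchy property in $\mathbb L^2(\mathbb P^W\otimes\mathbb P^B)$ and then names the limit; the underlying estimates are identical.
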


\begin{theorem}\label{Feynman-Kac}
For fixed $(x,t)\in [0,T]\times\mathbb R$,  $p\in[1,\infty)$, let $u_{t,x}^K:=\mathbb E^B\left[u_0(B_t^x)\exp\{\Psi_{t,x}^K\}\right]$ and denote $u_{t,x}:=\mathbb E^B\left[u_0(B_t^x)\exp\{\Psi_{t,x}\}\right]$, then it holds that:
\begin{align}
\lim_{K\to\infty} \|u_{t,x}^K-u_{t,x}\|_{\mathbb L^p(\mathbb P^W)}=0,
\end{align}
and 
\begin{align}
	\lim_{K\to\infty} u_{t,x}^K=u_{t,x},\; \mathbb P^W-a.s.
\end{align}

 Furthermore we have that 
\begin{align*}
[0,T]\times \mathbb R\times \Omega\ni (t,x,\omega)\mapsto u_{t,x}(\omega)
\end{align*}
 is the unique solution for (\ref{SHE}) (in the sense of theorem $3.1$ of \cite{uemura1996construction}).
\end{theorem}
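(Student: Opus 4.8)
The plan is to establish the three assertions in turn, conditioning throughout on the Brownian path $B$ (equivalently on $\mathcal G_T^B$), since conditionally on $\mathcal G_T^B$ both exponents $\Psi_{t,x}^K$ and $\Psi_{t,x}$ are Gaussian in the $W$-variable — the structure already isolated in Theorem \ref{Feynman-Kac functional}. Writing $c_j=c_j(B):=\int_0^t e_j(B_s^x)\,ds$ and $v_K:=\sum_{j=1}^K c_j^2$, the occupation-times formula (see \cite{revuz2013continuous}) gives $c_j=\langle e_j,L_\bullet(t)\rangle_{\mathbb L^2(\mathbb R)}$, so $v_K\uparrow v:=\int_{\mathbb R}L_a(t)^2\,da$ monotonically for a.e.\ $B$, and conditionally on $\mathcal G_T^B$ one has $\Psi_{t,x}^K\sim N(-v_K/2,v_K)$ and $\Psi_{t,x}\sim N(-v/2,v)$, jointly Gaussian with $\Psi_{t,x}^K\to\Psi_{t,x}$ in $\mathbb L^2(\mathbb P^W)$. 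For the $\mathbb L^p(\mathbb P^W)$ convergence I would first apply Minkowski's integral inequality to move the $\mathbb L^p(\mathbb P^W)$ norm inside $\mathbb E^B$, reducing the claim to $\mathbb E^B[(\mathbb E^W|e^{\Psi_{t,x}^K}-e^{\Psi_{t,x}}|^p)^{1/p}]\to0$. Conditionally on $B$ I would bound $|e^{\Psi_{t,x}^K}-e^{\Psi_{t,x}}|\le(e^{\Psi_{t,x}^K}+e^{\Psi_{t,x}})|\Psi_{t,x}^K-\Psi_{t,x}|$ and use the conditional Cauchy--Schwarz inequality: the log-normal identity $\mathbb E^W[e^{q\Psi_{t,x}^K}\mid\mathcal G_T^B]=e^{q(q-1)v_K/2}$ (and its analogue for $\Psi_{t,x}$) controls the exponential factors by $e^{C_p v}$ via $v_K\le v$, while $\mathbb E^W|\Psi_{t,x}^K-\Psi_{t,x}|^{2p}\to0$ pointwise in $B$ because $\Psi_{t,x}^K-\Psi_{t,x}$ is conditionally Gaussian with mean $(v-v_K)/2$ and variance $v-v_K\to0$. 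The resulting pointwise bound is dominated by $C_p\,e^{C_p v}(v^{1/2}+v)$, which is $\mathbb P^B$-integrable because the one-dimensional self-intersection local time $v$ has exponential moments of every order (one has $v\le t\sup_a L_a(t)$ and $\sup_a L_a(t)$ has Gaussian tails; cf.\ \cite{le1994exponential}), so dominated convergence over $B$ closes this part.

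For the almost-sure statement I would exploit a martingale structure in $K$. Let $Z_j=\int_{\mathbb R}e_j\,dW$ and $\mathcal F_K=\sigma(Z_1,\dots,Z_K)$; conditionally on $B$ the quantity $M_K^B:=\exp(\sum_{j=1}^K c_jZ_j-\tfrac12 v_K)=\prod_{j=1}^K\mathcal E^{c_je_j}$ is a product of independent unit-mean Wick exponentials, hence an $(\mathcal F_K)$-martingale. Integrating in $B$ and using Fubini (justified by the integrability established above), $u_{t,x}^K=\mathbb E^B[u_0(B_t^x)M_K^B]$ is itself an $(\mathcal F_K)$-martingale. Since the $\mathbb L^p$-convergence gives $\sup_K\|u_{t,x}^K\|_{\mathbb L^2(\mathbb P^W)}<\infty$, the martingale convergence theorem yields a.s.\ convergence of $u_{t,x}^K$, whose limit must agree with the $\mathbb L^p$-limit $u_{t,x}$.

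To identify $u_{t,x}$ with Uemura's solution I would compute its Wiener chaos expansion. Conditionally on $\mathcal G_T^B$, $e^{\Psi_{t,x}}$ is the Wick exponential of the first-chaos element $\int_{\mathbb R}L_a(t)\,dW_a$, so $e^{\Psi_{t,x}}=\sum_{n\ge0}\frac1{n!}I_n\big(L_\bullet(t)^{\otimes n}\big)$; interchanging $\mathbb E^B$ with the $W$-multiple integrals $I_n$ (legitimate since $B\perp W$ and the kernels are square-integrable by the same exponential moment bound) gives $u_{t,x}=\sum_{n\ge0}I_n(f_n)$ with $f_n(y_1,\dots,y_n)=\tfrac1{n!}\mathbb E^B[u_0(B_t^x)\prod_{i=1}^n L_{y_i}(t)]$. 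Using the occupation-times formula to write $\prod_i L_{y_i}(t)$ as an integral of $\prod_i\delta(B_{s_i}^x-y_i)$ over $[0,t]^n$ and applying the Markov property, I would rewrite $f_n$ as the symmetrization of the time-ordered iterated heat-kernel integral defining the kernels of Uemura's chaos solution, and then invoke the uniqueness in Theorem 3.1 of \cite{uemura1996construction} to conclude.

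The main obstacle is this last kernel identification: converting the formal $\delta(B_s^x-y)$ and the product of local times $\prod_i L_{y_i}(t)$ into honest $\mathbb L^2(\mathbb R^n)$ identities matching the symmetrized heat-kernel simplex kernels, together with the rigorous justification of the $\mathbb E^B$--$I_n$ interchange. An alternative that avoids explicit kernels is to pass to the limit $K\to\infty$ directly in the weak formulation (\ref{definition}) — using $\sum_{j\le K}e_j(y)e_j(\bullet)\to\delta_y$ and uniform $\mathbb L^2$-bounds on $u_{s,y}^K$ to show that $u$ satisfies the corresponding weak formulation of (\ref{SHE}) — and then appeal to uniqueness; there the difficulty is handling the simultaneous limit of the approximate identity and of $u_{s,y}^K$ inside the Malliavin pairing.
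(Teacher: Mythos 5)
Your treatment of the two convergence statements is sound, though it runs on slightly different rails than the paper's. For the $\mathbb L^p$ limit the paper argues qualitatively: $\Psi_{t,x}^K\to\Psi_{t,x}$ in $\mathbb L^2(\mathbb P^W\otimes\mathbb P^B)$ gives convergence in probability of the exponentials, and the conditional log-normal identity gives convergence of the $p$-norms, whence $\mathbb L^p$ convergence by the standard ``convergence in probability plus convergence of norms'' criterion. Your quantitative route --- the mean-value bound $|e^a-e^b|\le(e^a+e^b)|a-b|$, conditional Cauchy--Schwarz, and domination by $Ce^{Cv}(v^{1/2}+v)$ with $v=\int_{\mathbb R}L_a(t)^2\,da$ exponentially integrable --- is equally valid and arguably more self-contained. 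For the almost-sure statement you observe directly that $M_K^B=\prod_{j\le K}\mathcal E^{c_je_j}$ is a product of independent unit-mean factors, so that $(u_{t,x}^K)_K$ is an $(\mathcal F_K)$-martingale bounded in $\mathbb L^2(\mathbb P^W)$; the paper reaches the same martingale through Propositions \ref{second quantization} and \ref{SQ and CE prop}, identifying $u_{t,x}^K=\Gamma(A_K)u(t,x)=\mathbb E^W[u(t,x)\,|\,\sigma(Z_1,\dots,Z_K)]$ and applying martingale convergence to the closed martingale. Your version has the advantage of not needing to identify the limit first; both are correct.

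The genuine gap is exactly where you flag it: the identification of $u_{t,x}$ with Uemura's chaos solution. Your plan is to expand $e^{\Psi_{t,x}}$ as the Wick exponential of $\int_{\mathbb R}L_a(t)\,dW_a$ and to match the kernels $\tfrac1{n!}\,\mathbb E^B\big[u_0(B_t^x)\prod_{i}L_{y_i}(t)\big]$ with the symmetrized heat-kernel kernels (\ref{kernels}); this requires making rigorous sense of products of local times evaluated at the integration variables and justifying the interchange of $\mathbb E^B$ with the multiple integrals, neither of which you carry out. The paper sidesteps this entirely by working at the level of the smooth approximants: it computes the chaos expansion of $u_{t,x}^K$, whose kernels involve only Hermite functions and heat kernels, proves the algebraic identity $f_n^K(t,x)=A_K^{\otimes n}f_n(t,x)$ by an elementary change of variables (Proposition \ref{second quantization}), and then sends $K\to\infty$ using $\|f_n-A_K^{\otimes n}f_n\|_{\mathbb L^2(\mathbb R^n)}\to0$ together with the uniqueness of the $\mathbb L^2(\mathbb P^W)$ limit already secured in your first step. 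To close your argument you would either have to carry out the local-time kernel computation honestly (feasible via the occupation-times formula and the Markov property on each ordered simplex, but with nontrivial bookkeeping about symmetrization), or adopt the paper's detour through $\Gamma(A_K)$, which is the path of least resistance; your proposed alternative of passing to the limit in the weak formulation faces the double-limit difficulty you yourself identify and is not developed far enough to count as a proof.
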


Using our Feynman-Kac representation  we are able to derive the following formulae for the moments of the solution (this formula has also been obtained in \cite{hu2015stochastic} but no proof is provided).
\begin{theorem}\label{Moments}
Let $q\geq 2$ then the $q$-th moment of the unique solution of (\ref{SHE}) is given by
\begin{align}
\mathbb{E}^W\left[\left(u_{t,x}\right)^q\right]=\mathbb{E}^B\left[\left(\prod_{i=1}^qu_0(B_t^{(i)}+x)\right)\exp\bigg\{\sum_{i<j}^{q}\int_0^t\int_0^t \delta_0(B_s^{(i)}-B_r^{(j)})dsdr\bigg\}\right],
\end{align}
$(B^{(1)},...,B^{(q)})$ are $q$ independent $1$-dimensional Brownian motions  and $\int_0^t\int_0^t\delta_0(B_s-B_r')dsdr$  denotes the  intersection local time of the Brownian motions $B$ and $B'$(e.g. \cite{le1994exponential})
\end{theorem}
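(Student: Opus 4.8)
The plan is to establish the identity at the level of the smooth approximation \eqref{approx FK}, where every quantity is genuinely finite, and then transfer it to the limit using the convergence already furnished by Theorem \ref{Feynman-Kac}. Writing $u^K_{t,x}=\mathbb E^B[u_0(B^x_t)\exp\{\Psi^K_{t,x}\}]$ and realising the $q$-th power through $q$ independent copies $B^{(1)},\dots,B^{(q)}$ of the driving Brownian motion (all independent of $W$), I obtain
\begin{align*}
(u^K_{t,x})^q=\mathbb E^B\Big[\prod_{i=1}^q u_0(B^{(i)}_t+x)\exp\Big\{\sum_{i=1}^q\Psi^{K,(i)}_{t,x}\Big\}\Big],
\end{align*}
where $\Psi^{K,(i)}_{t,x}$ is built from $B^{(i)}$ and the common noise $W$. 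Setting $a^{(i)}_k:=\int_0^t e_k(B^{(i)}_s+x)\,ds$ (finite because each Hermite function is bounded) and $Z_k=\int_{\mathbb R}e_k\,dW$, the total exponent reads $\sum_{k=1}^K Z_k\sum_{i}a^{(i)}_k-\tfrac12\sum_i\sum_{k=1}^K(a^{(i)}_k)^2$.

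At finite $K$ only finitely many independent standard Gaussians $Z_k$ appear and $u_0$ is bounded, so Fubini applies and the Gaussian moment generating function yields
\begin{align*}
\mathbb E^W\Big[\exp\Big\{\sum_{i=1}^q\Psi^{K,(i)}_{t,x}\Big\}\Big]=\exp\Big\{\tfrac12\sum_{k=1}^K\Big(\sum_i a^{(i)}_k\Big)^2-\tfrac12\sum_i\sum_{k=1}^K(a^{(i)}_k)^2\Big\}=\exp\Big\{\sum_{i<j}\sum_{k=1}^K a^{(i)}_k a^{(j)}_k\Big\},
\end{align*}
the diagonal contributions cancelling exactly, in perfect analogy with the conditional-Gaussian statement of Theorem \ref{Feynman-Kac functional}. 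Since $\sum_{k=1}^K a^{(i)}_k a^{(j)}_k=\int_0^t\int_0^t\sum_{k=1}^K e_k(B^{(i)}_s+x)e_k(B^{(j)}_r+x)\,ds\,dr$ and the truncated Hermite kernel $\sum_{k\le K}e_k(\cdot)e_k(\cdot)$ approximates $\delta_0$ as in \eqref{diracdelta} (the shifts by $x$ cancelling in the difference), this is exactly the $K$-truncation of the mutual intersection local time $\int_0^t\int_0^t\delta_0(B^{(i)}_s-B^{(j)}_r)\,ds\,dr$. Hence $\mathbb E^W[(u^K_{t,x})^q]$ equals the right-hand side of the asserted formula with each intersection local time replaced by its truncation.

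It remains to pass to the limit $K\to\infty$ on both sides. On the left, $u^K_{t,x}\to u_{t,x}$ in every $\mathbb L^p(\mathbb P^W)$ by Theorem \ref{Feynman-Kac}, and the resulting uniform $\mathbb L^{p}$ bounds make $\{(u^K_{t,x})^q\}_K$ uniformly integrable, so $\mathbb E^W[(u^K_{t,x})^q]\to\mathbb E^W[(u_{t,x})^q]$. On the right, I would first show that the bilinear forms $\sum_{k\le K}a^{(i)}_k a^{(j)}_k$ converge in $\mathbb L^2(\mathbb P^B)$ to the corresponding intersection local times, by the same computation that established $\Psi^K_{t,x}\to\Psi_{t,x}$ in Theorem \ref{Feynman-Kac functional}, and then transfer this convergence through the exponential and the $\mathbb E^B$ expectation.

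The crux, and the step I expect to be the main obstacle, is precisely this last interchange: the partial Hermite kernels are not sign-definite, so the convergence inside the exponential is not monotone, and a dominating bound is needed. This reduces to controlling an exponential moment of the form $\mathbb E^B\big[\exp\{(1+\varepsilon)\sum_{i<j}\int_0^t\int_0^t\delta_0(B^{(i)}_s-B^{(j)}_r)\,ds\,dr\}\big]<\infty$, i.e.\ the finiteness of exponential moments of Brownian mutual intersection local times, for which one invokes estimates of the type proved in \cite{le1994exponential}. With such a bound, $\{\exp(\sum_{i<j}\sum_{k\le K}a^{(i)}_k a^{(j)}_k)\}_K$ is uniformly integrable and dominated convergence delivers the stated identity. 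Since the limiting field was already identified as the unique solution of \eqref{SHE} in Theorem \ref{Feynman-Kac}, the formula holds for that solution.
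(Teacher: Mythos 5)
Your argument is essentially the paper's proof: realize the $q$-th power through $q$ independent copies of $B$, condition on $\mathcal G_T^B$, apply the Gaussian moment generating function so that the diagonal terms cancel and the cross terms become the $K$-truncated mutual intersection local times, then pass to the limit by dominated convergence on the $B$-side and $\mathbb L^p(\mathbb P^W)$-convergence on the $W$-side. The one step you flag as the main obstacle --- a $K$-uniform domination of the exponentials --- is resolved in the paper more directly than you propose: by Cauchy--Schwarz and Parseval, $\sum_{k\le K}a^{(i)}_k a^{(j)}_k\le \tfrac12\big(\int_{\mathbb R}|L^{(i)}_a(t)|^2\,da+\int_{\mathbb R}|L^{(j)}_a(t)|^2\,da\big)$ uniformly in $K$, so the dominating function is $\exp\big\{q\max_{1\le i\le q}\int_{\mathbb R}|L^{(i)}_a(t)|^2\,da\big\}$, whose integrability follows from the exponential integrability of the single-particle self-intersection local time already invoked in Theorem \ref{Feynman-Kac functional}, with no need for exponential-moment estimates on the mutual intersection local times themselves.
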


\section{Proof of theorem \ref{existence}}

	Letting $F=\mathcal E^{\xi},\;\xi\in\mathbb L^2(\mathbb R)$ we have that $\mathbb E\left[u_{t,x}^K F\right]$ coincides with the $S$-transform (e.g.\cite{kuo2018white}) of $u_{t,x}^K$; i.e.
	\begin{align*}
		\mathbb E^W\left[u_{t,x}^K F\right]=\mathcal S (u_{t,x}^K)(\xi)&=\mathbb E^W\left[\mathbb{E}^B\left[u_0(B_t^x)\times \exp\{\Psi_{t,x}^K\}\right]\times\mathcal E^{\xi}\right]\\
		&=\mathbb E^B\left[u_0(B_t^x) \mathbb{E}^W\left[\exp\{\Psi_{t,x}^K\}\times\mathcal E^{\xi}\right]\right]\\
		&=\mathbb E^B\left[u_0(B_t^x) \exp\bigg\{\int_0^t \sum_{j=1}^{K}e_j(B_s^x)\int_{\mathbb R}e_j(y)\xi(y)dy\;ds \bigg\}\right],
	\end{align*}
	hence by the classical Feynman-Kac formula we can see that the latter is the solution of 
	\begin{align*}
		\begin{cases}
			\partial_t \mathcal S_{t,x}(\xi)=\frac{1}{2} \partial_{xx}^2 \mathcal S_{t,x}(\xi)+ \mathcal S_{t,x}(\xi)\cdot\left( \sum_{j=1}^{K}e_j(x)\int_{\mathbb R}e_j(y)\xi(y)dy\right), \; (t,x)\in [0,T]\times\mathbb R\\
			\mathcal S_{0,x}(\xi)=u_0(x),
		\end{cases}
	\end{align*}
	for any $\xi\in \mathbb L^2(\mathbb R)$.
	
	The solution of this equation can be written in mild form as	
	\begin{align*}
		\mathcal S_{t,x}(\xi)&=(P_t u_0)(x)+\int_0^t \int_{\mathbb R}p_{t-s}(x-y)\mathcal S_{s,y}(\xi) \sum_{j=1}^{K}e_j(y)\int_{\mathbb R}e_j(z)\xi(z)dz\; dy,\\
		&=(P_t u_0)(x)+\int_0^t \int_{\mathbb R}p_{t-s}(x-y)\mathbb E^W\left[u_{t,x}^K \mathcal E^{\xi}\right]\sum_{j=1}^{K}e_j(y)\int_{\mathbb R}e_j(z)\xi(z)dz\; dy,
	\end{align*}	
	or which is equivalent,	
	\begin{align*}
		\mathbb E^W\left[u_{t,x}^K \mathcal E^{\xi}\right]=(P_t u_0)(x)+ \mathbb E^W\left[\bigg\langle \int_0^t \int_{\mathbb R}p_{t-s}(x-y) \left(\sum_{j=1}^{K}e_j(z)e_j(\bullet )\right) u_{s,y}^K dyds,\xi(\bullet )\mathcal E^{\xi} \bigg\rangle_{\mathbb \mathbb L^2(\mathbb R)}\right],
	\end{align*}
	which implies (\ref{definition}) since $\mathbb E^W[\mathcal E^{\xi}]=1$ and $D_{(\bullet)} \mathcal E^{\xi}=\xi(\bullet)\mathcal E^{\xi}$ and the fact that the stochastic exponentials are a dense family in $\mathbb D^{1,2}$.

\section{Proof of theorem \ref{Feynman-Kac functional}}

	We start by showing that  $\{\Psi_{t,x}^K\}_{K\in\mathbb N}$ is a Cauchy sequence in $\mathbb L^2(\mathbb P^W\otimes \mathbb P^B)$, and we let $\vvvert\bullet\vvvert_p$ be the norm on the Hilbert space $\mathbb L^p(\mathbb P^W\otimes \mathbb P^B)$.
	
	Without lost of generality assume that $N\geq M$
	\begin{align*}
		\vvvert\Psi_{t,x}^N-\Psi_{t,x}^M\vvvert_2^2&=\left\vvvert\sum_{j=M+1}^{N}z_j\times\left(\int_0^te_j(B_s^x)ds\right)-\frac{1}{2}\sum_{j=1}^{K}\left(\int_0^te_j(B_s^x)ds\right)^2\right\vvvert_2 ^2\\
		&= \mathbb{E}^B\mathbb{E}^W\left[\bigg|\sum_{j=M+1}^{N}z_j\times\left(\int_0^te_j(B_s^x)ds\right)-\frac{1}{2}\sum_{j=M+1}^{N}\left(\int_0^te_j(B_s^x)ds\right)^2\bigg|^2\right]\\
		&=\mathbb{E}^B\mathbb{E}^W\Bigg[\left(\sum_{j=M+1}^{N}z_j\times\left(\int_0^te_j(B_s^x)ds\right)\right)^2\\
		&-\left(\sum_{j=M+1}^{N}z_j\times\left(\int_0^te_j(B_s^x)ds\right)\right)\left(\sum_{j=M+1}^{N}\left(\int_0^te_j(B_s^x)ds\right)^2\right)\\
		&+\frac 1 4 \left(\sum_{j=M+1}^{N}\left(\int_0^te_j(B_s^x)ds\right)^2\right)^2\Bigg]\\
		&=\mathbb{E}^B\left[\sum_{j=M+1}^{N}\left(\int_0^te_j(B_s^x)ds\right)^2+\frac 1 4 \left(\sum_{j=M+1}^{N}\left(\int_0^te_j(B_s^x)ds\right)^2\right)^2\right].
	\end{align*}
	
Then if would suffices to show that 	
	\begin{align*}
		\lim_{N,M\to\infty} \mathbb{E}^B\left[\sum_{j=M+1}^{N}\left(\int_0^te_j(B_s^x)ds\right)^2+\frac 1 4 \left(\sum_{j=M+1}^{N}\left(\int_0^te_j(B_s^x)ds\right)^2\right)^2\right]=0.
	\end{align*}
	
Using (\ref{local time parseval}) we have that 
	\begin{align*}
		\sum_{j=M+1}^{N}\left(\int_0^te_j(B_s^x)ds\right)^2\leq \int_{\mathbb R} |L_a(t)|^2 da=\alpha_t,
	\end{align*}
for any positive integers $N\geq M$. This together with the fact that the random variable $\alpha_t$ is exponentially integrable (e.g. \cite{le1994exponential} p. $178$) allows us to use the Dominated Convergence theorem to bring the limit inside the expectation. 

Finally from (\ref{local time parseval}) we know that the sequence 
	\begin{align*}
		S_n=\sum_{j=1}^{n}\left(\int_0^te_j(B_s^x)ds\right)^2,
	\end{align*}
is $\mathbb P^B$-a.s. convergent and thus we have that 
	\begin{align*}
		|S_N-S_M|=\sum_{j=M+1}^{N}\left(\int_0^te_j(B_s^x)ds\right)^2\to 0,\; \mathbb P^B-a.s.
	\end{align*}
	when $N,M\to\infty$.
	
	This implies that 
	\begin{align*}
		\lim_{N,M\to\infty}\vvvert\Psi_{t,x}^N-\Psi_{t,x}^M\vvvert_2^2=0.
	\end{align*}

	Furthermore notice that conditional on $\mathcal G_T^{B}$ the random variable $\Psi_{t,x}^N$ is a Gaussian random variable with mean $-\frac{1}{2}\sum_{j=1}^{K}\left(\int_0^te_j(B_s^x)ds\right)^2$ and variance $\sum_{j=1}^{K}\left(\int_0^te_j(B_s^x)ds\right)^2$ and since the $\mathbb L^2(P^W)$ limit preserves the Gaussianity we have that conditional on $\mathcal G_T^{B}$ the random variable 
	$\Psi_{t,x}\sim N\left(-\frac{1}{2}\int_{\mathbb R}|L_a(t)|^2da,\int_{\mathbb R}|L_a(t)|^2da\right)$.
	
\section{Proof of theorem \ref{Feynman-Kac}}

The proof of our main theorem will be done in several steps:
\textit{
\begin{enumerate}
\item Show that the ``approximated Feynman-Kac'' formula converges in $\mathbb L^2(\mathbb P^W)$ to the ``formal Feynman-Kac''.
\item Obtain the Wiener Chaos expansion of the ``approximated Feynman-Kac''.
\item Show that the latter converges in $\mathbb L^2(\mathbb P^W)$ (as $K\to\infty$) to the solution of (\ref{SHE}) represented by the formal series given in \cite{uemura1996construction} and \cite{hu2002chaos}.
\end{enumerate}}

Then since the limit in $\mathbb L^2(\mathbb P^W)$ is $\mathbb P^W$-a.s. unique, we conclude that the solution given by the formal Wiener chaos series in \cite{uemura1996construction} and \cite{hu2002chaos} coincides with the ``formal Feynman-Kac'' formula.

The previous can be summarized by the following diagram,
\begin{center}
	\begin{tikzpicture}
		\definecolor{blue}{gray}{0.8}
		
		\node [draw, fill=blue,
		minimum width=2cm,
		minimum height=1.2cm,
		]  (FK1) {$\mathbb{E}^B\left[u_0(B_t^x)\exp\big\{\Psi_{t,x}^K\big\}\right]$};
		
		\node [draw,fill=blue,
		minimum width=2cm, 
		minimum height=1.2cm,
		right=3cm of FK1
		] (FK2) {$\mathbb{E}^B\left[u_0(B_t^x)\exp\big\{\Psi_{t,x}\big\}\right]$};
		
		\node [draw,fill=blue,
		minimum width=2cm, 
		minimum height=1.2cm, 
		below = 1cm of FK1
		]  (WC1) {$\sum_{n=0}^{\infty}I_n(f_n^K(t,x))$};

		\node [draw,fill=blue,
		minimum width=2cm, 
		minimum height=1.2cm, 
		below = 1cm of FK2
		]  (WC2) {$\sum_{n=0}^{\infty}I_n(f_n(t,x))$};


		\draw[-{Latex[length=3mm]}] (FK1) -- (FK2)
		node[midway,above]{\textit{Step 1}}
		node[midway,below]{$K\to\infty$}
		;
		
		\draw[-{Latex[length=3mm]}] (WC1) -- (WC2)
		node[midway,above]{\textit{Step 3}}
		node[midway,below]{$K\to\infty$};
		
		\draw[-] (FK1) -- (WC1)
		node[midway,left]{\textit{Step 2}}
		node[midway,right]{$=$};
		
		\draw[dotted] (FK2) -- (WC2)
		node[midway,right]{$\mathbf{=}\;(\textit{by uniqueness of the limit})$};
	\end{tikzpicture}
\end{center}

\subsection*{\textit{Step 1}:}
We start by showing the convergence of the ``approximated Feynman-Kac'' formula:
\begin{align*}
	\mathbb E^W\left[ |u_{t,x}^K-u_{t,x}|^p\right]&=\mathbb E^W \big|\mathbb E^B \left[u_0(B_t^x)\left(\exp\{\Psi_{t,x}^K\}-\exp\{\Psi_{t,x}\}\right)\right]\big|^p\\
	&\leq \|u_0\|_{\infty}^p\mathbb E^W \mathbb E^B\left[\big|\exp\{\Psi_{t,x}^K\}-\exp\{\Psi_{t,x}\} \big|^p\right]
\end{align*}
Since $\Psi_{t,x}^K\to \Psi_{t,x}$ in $\mathbb L^2(\mathbb P^W\otimes P^B)$, then 
$\exp\{\Psi_{t,x}^K\}\to\exp\{\Psi_{t,x}\}$ in probability, in order to show the desired result we just need to prove that $\vvvert \exp\{\Psi_{t,x}^K\}\vvvert_p\to \vvvert \exp\{\Psi_{t,x}\}\vvvert_p$.

Using the Tower rule and the fact that conditional on $\mathcal G_T^B$ the random variables $\Psi_{t,x}$ and $\Psi_{t,x}^K$ are Gaussian we have that 
\begin{align*}
	\vvvert\exp\{\Psi_{t,x}\}\vvvert^p_p &=\mathbb E^W\mathbb E^B|\exp\{p\Psi_{t,x}\}|\\
	&=\mathbb E^B\left[\mathbb E^W\left[\exp\{p\Psi_{t,x}\}|\mathcal G_T^B\right]\right]\\
	&=\mathbb E^B\left[\exp\bigg\{\frac{p(p-1)}{2}\int_{\mathbb R}|L_a(t)|^2da\bigg\}\right]<\infty,
\end{align*}
and
\begin{align*}
	\vvvert\exp\{\Psi_{t,x}^K\}\vvvert^p_p &=\mathbb E^W\mathbb E^B|\exp\{p\Psi_{t,x}^K\}|\\
	&=\mathbb E^B\left[\mathbb E^W\left[\exp\{p\Psi_{t,x}\}|\mathcal F_T^B\right]\right]\\
	&=\mathbb E^B\left[\exp\bigg\{\frac{p(p-1)}{2}\sum_{j=1}^{K}\left(\int_0^te_j(B_s^x)ds\right)^2\bigg\}\right].
\end{align*}
At this point we can use Monotone convergence theorem to bring the limit inside the expectation, the continuity of the exponential function and (\ref{local time parseval}) implies the desired result.

\subsection*{\textit{Step 2}:}

Now we need to obtain the Wiener chaos decomposition of (\ref{approx FK}) and start by noticing that conditional on $\mathcal G_T^B$ we can write
\begin{align*}
	\exp\{\Psi_{t,x}^K\}=\sum_{n=0}^{\infty}\frac{1}{n!}I_n\left(g_n^K(t,x,\bullet)\right),\;\text{convergence in } \mathbb L^2(\mathbb P^W)
\end{align*}
where the $n$-th kernel is given by:
\begin{align*}
	g^K_n(t,x,\bullet)=\sum_{i_1=1}^{K}\cdots\sum_{i_n=1}^{K}\frac{1}{n!}\left(\int_{[0,t]^n}e_{i_1}(B_{s_1}^x)\cdots e_{i_n}(B_{s_n}^x)ds_1...ds_n\right) (e_{i_1}\otimes\cdots\otimes e_{i_n})(\bullet).
\end{align*}
From the latter it follows that 
\begin{align*}
	u_{t,x}^K&=\mathbb{E}^B\Bigg[u_0(B_t^x) \times \sum_{n=0}^{\infty}I_n\left(\sum_{i_1=1}^{K}\cdots\sum_{i_n=1}^{K}\frac{1}{n!}\left(\int_{[0,t]^n}e_{i_1}(B_{s_1}^x)\cdots e_{i_n}(B_{s_n}^x)ds_1...ds_n\right) e_{i_1}\otimes\cdots\otimes e_{i_n}\right)\Bigg].
\end{align*}

Since the series is convergent in $\mathbb L^2(\mathbb P^W)$ we can apply Jensen inequality and monotone convergence to interchange the series with the expectation yielding
\begin{align*}
	\sum_{n=0}^{\infty}I_n\left(\sum_{i_1=1}^{K}\cdots\sum_{i_n=1}^{K}\mathbb E^B\left[u_0(B_t^x)\frac{1}{n!}\int_{[0,t]^n.}e_{i_1}(B_{s_1}^x)\times\cdots\times e_{i_n}(B_{s_n}^x)ds_1...ds_n\right] e_{i_1}\otimes\cdots\otimes e_{i_n}\right).
\end{align*}

Since we are summing over all possible combinations of the indexes $(i_1,...,i_n)$ the expression above equals
\begin{align*}
	\sum_{n=0}^{\infty}I_n\left(\sum_{i_1=1}^{K}\cdots\sum_{i_n=1}^{K}\mathbb E^B\left[\int_{\mathbb{T}_{t,n}}u_0(B_t^x)e_{i_1}(B_{s_1}^x)\times\cdots\times e_{i_n}(B_{s_n}^x)ds_1...ds_n\right] e_{i_1}\otimes\cdots\otimes e_{i_n}\right)
\end{align*}
where the time integrals are taken over the simplex
\begin{align*}
	\mathbb{T}_{t,n}:=\{(s_1,s_2,...,s_n);0\leq s_1\leq s_2\leq \cdots\leq s_n\leq t\}.
\end{align*}

An application of Fubini-Tonelli lemma shows that the kernel of the $n$-fold multiple stochastic integral is given by:
\begin{align*}
	\int_{\mathbf{T}_{t,n}}\sum_{i_1=1}^{K}\cdots\sum_{i_n=1}^{K}e_{i_1}\otimes\cdots\otimes e_{i_n}(\bullet)\mathbb E^B\left[u_0(B_t^x)e_{i_1}(B_{s_1}^x)\times\cdots\times e_{i_n}(B_{s_n}^x)\right] ds_1...ds_n,\\
\end{align*}
the conclusion is stated in the following proposition.
\begin{proposition}
	Let $u_{t,x}^K$ be given by (\ref{approx FK}) then it holds that:
	\begin{align}\label{WC1}
		u_{t,x}^K=\sum_{n=0}^{\infty} I_n(f_n^K(t,x)),
	\end{align}
	where 
	\begin{align}\label{kernel1}
		\begin{cases}
			f_0^K(t,x)&=(P_t u_0)(x),\\		f_n^K(t,x,\bullet)&=\int_{\mathbb{T}_{t,n}}\sum_{i_1=1}^{K}\cdots\sum_{i_n=1}^{K}e_{i_1}\otimes\cdots\otimes e_{i_n}(\bullet)\\
			&\times \int_{\mathbb R^{n+1}}p_{t-s_n}(x_n-x_{n-1})\cdots p_{s_1}(x_0-x) u_0(x_n)e_{i_1}(x_{n-1})\times\cdots\times e_{i_n}(x_0) d\mathbf{x}\; d\mathbf{s}
		\end{cases}
	\end{align}
	where $ d\mathbf{x}:=dx_0\cdots dx_n$, $d\mathbf{s}:=ds_1\cdots ds_n$.
\end{proposition}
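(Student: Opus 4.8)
The plan is to obtain the expansion by conditioning on the auxiliary Brownian motion $B$, recognising the conditional integrand as a stochastic exponential, and then transporting the $B$-expectation through the multiple integrals. Working conditionally on $\mathcal G_T^B$, I would set $\phi_{t,x}^K(\bullet):=\sum_{j=1}^K\big(\int_0^t e_j(B_s^x)\,ds\big)e_j(\bullet)\in\mathbb L^2(\mathbb R)$. Since $\{e_j\}$ is orthonormal, $\|\phi_{t,x}^K\|^2_{\mathbb L^2(\mathbb R)}=\sum_{j=1}^K\big(\int_0^t e_j(B_s^x)\,ds\big)^2$ is exactly the quadratic term subtracted in $\Psi_{t,x}^K$, so $\exp\{\Psi_{t,x}^K\}=\mathcal E^{\phi_{t,x}^K}$ is the Wick exponential of $I(\phi_{t,x}^K)$. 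I would then invoke the standard chaos expansion of a stochastic exponential, $\mathcal E^{\phi}=\sum_{n}\frac1{n!}I_n(\phi^{\otimes n})$ (convergent in $\mathbb L^2(\mathbb P^W)$ for $\mathbb P^B$-a.e.\ path), which is precisely the conditional expansion quoted at the start of this step.

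The core of the argument is then to justify
\[
u_{t,x}^K=\mathbb E^B\!\left[u_0(B_t^x)\sum_{n=0}^\infty\frac1{n!}I_n\big((\phi_{t,x}^K)^{\otimes n}\big)\right]=\sum_{n=0}^\infty\frac1{n!}I_n\Big(\mathbb E^B\big[u_0(B_t^x)(\phi_{t,x}^K)^{\otimes n}\big]\Big),
\]
from which $f_n^K=\tfrac1{n!}\,\mathbb E^B[u_0(B_t^x)(\phi_{t,x}^K)^{\otimes n}]$ is read off. I would split this into two elementary facts. First, each partial sum is a finite combination of terms $I_n(h_n(B,\bullet))$ whose kernels depend on the path $B$, and for these the interchange $\mathbb E^B[I_n(h_n(B,\bullet))]=I_n(\mathbb E^B[h_n(B,\bullet)])$ is a Fubini theorem for multiple Wiener integrals, valid because $\mathbb E^B\|h_n(B,\bullet)\|_{\mathbb L^2(\mathbb R^n)}<\infty$ by the square/exponential integrability of $\alpha_t=\int_{\mathbb R}|L_a(t)|^2\,da$ used in Step~1. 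Second, the map $X\mapsto\mathbb E^B[X]$ is an $\mathbb L^2$-contraction from $\mathbb L^2(\mathbb P^W\otimes\mathbb P^B)$ into $\mathbb L^2(\mathbb P^W)$, hence continuous; since the partial sums of $u_0(B_t^x)\mathcal E^{\phi_{t,x}^K}$ converge in $\mathbb L^2(\mathbb P^W\otimes\mathbb P^B)$ (again via $\alpha_t$), continuity lets the expectation pass through the infinite sum.

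Expanding the tensor power and using $\prod_k\int_0^t e_{i_k}(B_s^x)\,ds=\int_{[0,t]^n}\prod_k e_{i_k}(B_{s_k}^x)\,d\mathbf s$ gives
\[
f_n^K=\frac1{n!}\sum_{i_1,\dots,i_n=1}^K\mathbb E^B\!\Big[u_0(B_t^x)\!\int_{[0,t]^n}\!\prod_{k}e_{i_k}(B_{s_k}^x)\,d\mathbf s\Big]\,e_{i_1}\otimes\cdots\otimes e_{i_n}.
\]
The step I expect to be the main obstacle is converting the cube $[0,t]^n$ into the ordered simplex $\mathbb T_{t,n}$ while correctly absorbing the $\tfrac1{n!}$. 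The key observation is that the scalar $\mathbb E^B[u_0(B_t^x)\prod_k e_{i_k}(B_{s_k}^x)]$ is invariant under any simultaneous permutation of the times $(s_k)$ and the indices $(i_k)$; decomposing $[0,t]^n$ into its $n!$ ordered chambers and relabelling the summation indices on each chamber therefore produces $n!$ identical contributions, which cancels the prefactor and leaves an integral over $\mathbb T_{t,n}$. Care is needed only in the bookkeeping of this relabelling — it reverses the pairing of times and indices, which is why $e_{i_1}$ ends up attached to the latest time and $e_{i_n}$ to the earliest — and in the fact that the resulting kernel need not be symmetric, though this is harmless since $I_n$ depends only on its symmetrisation. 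Finally, on the region $0\le s_1\le\cdots\le s_n\le t$ I would apply the Markov property (Chapman--Kolmogorov) to the Brownian motion started at $x$ to write the conditional expectation as the iterated heat-kernel integral appearing in (\ref{kernel1}); the $n=0$ term is $\mathbb E^B[u_0(B_t^x)]=(P_tu_0)(x)$.

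As a cross-check I would note that the whole identification can alternatively be made rigorous through the $S$-transform, which is injective on $\mathbb L^2(\mathbb P^W)$: the $S$-transform of $u_{t,x}^K$ was already computed in the proof of Theorem~\ref{existence}, and expanding it as a power series in $\xi$ reads off the kernels $f_n^K$ directly, bypassing the Fubini interchange entirely.
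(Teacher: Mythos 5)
Your proposal follows essentially the same route as the paper: expand $\exp\{\Psi_{t,x}^K\}$ conditionally on $\mathcal G_T^B$ as a Wick/stochastic exponential in its Wiener chaos, interchange $\mathbb E^B$ with the series and the multiple integrals, pass from the cube $[0,t]^n$ to the simplex $\mathbb T_{t,n}$ by symmetry to absorb the $1/n!$, and express the resulting $B$-expectation via iterated heat kernels. Your justifications of the interchanges and of the simplex bookkeeping are in fact somewhat more explicit than the paper's, but the argument is the same.
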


In \cite{uemura1996construction} the author has shown that if the initial condition is deterministic and square integrable, then equation (\ref{SHE}) has a unique weak solution given by the Wiener Chaos expansion: 
\begin{align}\label{WC2}
	u(t,x)=\sum_{n=0}^{\infty} I_n(f_n(t,x)),
\end{align}
where
\begin{align}\label{kernels}
	\begin{cases}
		f_0(t,x)=(P_t u_0)(x),\\
		f_n(t,x;x_1,...,x_n)=\int_{\mathbb{T}_{t,n}}\int_{\mathbb R} p_{t-s_n}(x-x_n)\cdots p_{s_1}(x_1-x_0)u_0(x_0) dx_0 d\mathbf{s},
	\end{cases}
\end{align}
see also equations $(4.2)$ and $(4.3)$ of \cite{hu2002chaos}.

Now the idea is to show that (\ref{WC1}) converges in $\mathbb L^2(\mathbb P^W)$ to (\ref{WC2}) as $K\to \infty$ and for that we will need the following:

\begin{definition}\label{projection}
	Let $K$ be some fixed positive integer then $A_K:S'(\mathbb R)\to \mathbb \mathbb L^2(\mathbb R)$ is a self-adjoint projection operator defined by the action 
	\begin{align}
		A_K f=A_K \left(\sum_{j=1}^{\infty}\langle f,e_j\rangle e_j \right)=\sum_{j=1}^{K}\langle f,e_j\rangle e_j,
	\end{align}
	for any $f\in S'(\mathbb R)$, i.e.  the orthogonal projection on the linear span of the first $K$ elements of the CONS $\{e_j\}_{j\in\mathbb N}$.
\end{definition}

\begin{proposition}\label{second quantization}
	Let $u(t,x), (t,x)\in [0,T]\times \mathbb R$ denote the weak solution of (\ref{SHE}) given in \cite{uemura1996construction} (eq. $(3.5)$). Then for any $(t,x)\in[0,T]\times\mathbb R$ it holds that:
	\begin{align}
		u_{t,x}^K=\Gamma(A_K)u(t,x),
	\end{align}
	where $\Gamma(A_K)$ stands for the second quantization of the projection operator $A_K$.
\end{proposition}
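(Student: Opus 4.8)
The plan is to reduce the operator identity to an identity at the level of Wiener chaos kernels. By the very definition of the second quantization operator recalled in the Preliminaries, and since $A_K$ is an orthogonal projection (in particular a contraction, so that $\Gamma(A_K)$ does map $\mathbb L^2(\mathbb P^W)$ into itself), I would write
\[
\Gamma(A_K)u(t,x)=\sum_{n=0}^{\infty} I_n\big(A_K^{\otimes n} f_n(t,x)\big),
\]
with $f_n$ the Uemura kernels (\ref{kernels}). Comparing this with the expansion (\ref{WC1})--(\ref{kernel1}) of $u_{t,x}^K$ and invoking the uniqueness of the kernels in a chaos expansion, it suffices to prove the kernel identity $A_K^{\otimes n} f_n(t,x)=f_n^K(t,x)$ for every $n\ge 0$ (the case $n=0$ being $f_0^K=f_0=(P_t u_0)(x)$, with $A_K^{\otimes 0}=\mathrm{id}$).

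First I would make the projection explicit. Since $A_K$ is the orthogonal projection of $\mathbb L^2(\mathbb R)$ onto $V_K:=\mathrm{span}\{e_1,\dots,e_K\}$ and $\{e_{i_1}\otimes\cdots\otimes e_{i_n}\}$ is a CONS of $\mathbb L^2(\mathbb R^n)$, the tensor power $A_K^{\otimes n}$ is the orthogonal projection onto $V_K^{\otimes n}$, so that
\[
A_K^{\otimes n} f_n(t,x)=\sum_{i_1=1}^{K}\cdots\sum_{i_n=1}^{K}\big\langle f_n(t,x),\,e_{i_1}\otimes\cdots\otimes e_{i_n}\big\rangle_{\mathbb L^2(\mathbb R^n)}\;e_{i_1}\otimes\cdots\otimes e_{i_n}.
\]
This already has the shape of (\ref{kernel1}), so the entire matter reduces to identifying the scalar coefficients.

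Then I would compute the inner product. Inserting the explicit kernel (\ref{kernels}) and moving the $e_{i_k}$-integrations inside by Fubini--Tonelli (legitimate because $u_0$ is bounded, the heat kernels integrate to one, and the Hermite functions are bounded and lie in $\mathbb L^1\cap\mathbb L^2$, the finite-time simplex keeping everything integrable), the coefficient becomes the chain integral
\[
\int_{\mathbb T_{t,n}}\!\!\int_{\mathbb R^{n+1}} p_{t-s_n}(x-x_n)\cdots p_{s_1}(x_1-x_0)\,u_0(x_0)\,e_{i_1}(x_1)\cdots e_{i_n}(x_n)\,d\mathbf{x}\,d\mathbf{s}.
\]
This is the same Chapman--Kolmogorov (Markov-property) chain of heat kernels that defines the coefficient of $f_n^K$ in (\ref{kernel1}); the two differ only in that the spatial variables and the indices $i_1,\dots,i_n$ are listed in opposite time order. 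I would reconcile them using the change of variables $s_k\mapsto t-s_{n+1-k}$ (which maps the simplex $\mathbb T_{t,n}$ onto itself), the symmetry $p_\tau(a-b)=p_\tau(b-a)$ of the heat kernel, and a relabelling of the spatial integration variables; this converts one chain into the other up to a reversal of the order of $i_1,\dots,i_n$. Because $f_n(t,x)$ is symmetric, reversing the index order inside both the coefficient and the tensor $e_{i_1}\otimes\cdots\otimes e_{i_n}$ leaves the full sum unchanged, so the two projected kernels coincide.

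The genuinely substantive point, and the only place requiring care, is this bookkeeping in the last step: verifying that the two superficially different chain integrals in (\ref{kernels}) and (\ref{kernel1}) are truly equal after the time reversal, the heat-kernel symmetry and the relabelling of summation indices, and that the symmetry of $f_n$ absorbs the residual reordering; everything else is the formal definition of $\Gamma(A_K)$, uniqueness of chaos kernels, and a routine Fubini. Alternatively, and perhaps more transparently, one can bypass the kernels entirely: since $A_K$ is self-adjoint one has $\mathcal S(\Gamma(A_K)u(t,x))(\xi)=\mathcal S(u(t,x))(A_K\xi)$, and because $\sum_{j=1}^{K}e_j(\bullet)\langle e_j,\xi\rangle=(A_K\xi)(\bullet)$, the $S$-transform computation already carried out in the proof of Theorem \ref{existence} shows that this equals $\mathcal S(u_{t,x}^K)(\xi)$; injectivity of the $S$-transform then yields the claim.
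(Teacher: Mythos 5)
Your proof follows essentially the same route as the paper's: reduce the identity to the kernel-level statement $A_K^{\otimes n}f_n(t,x)=f_n^K(t,x)$, write the projection explicitly in the tensor CONS, and match the two chains of heat kernels via the time reversal $s_k\mapsto t-s_{n+1-k}$, the symmetry $p_\tau(a-b)=p_\tau(b-a)$, and a relabelling of the spatial variables (the paper carries this out explicitly for $n=2$; in fact the substitution aligns the indices directly, so the final appeal to the symmetry of $f_n$ is a harmless extra precaution rather than a needed step). Your closing $S$-transform remark, using $\mathcal S(\Gamma(A_K)X)(\xi)=\mathcal S(X)(A_K\xi)$ together with the computation from the proof of Theorem \ref{existence}, is a valid and arguably cleaner alternative, but it is an aside rather than the main argument.
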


\begin{proof}
	In order to prove the latter we need to show that 
	\begin{align*}
		f_n^K(t,x,\bullet)=(A_K^{\otimes n}f_n(t,x))(\bullet)=\sum_{i_1=1}^{K}\cdots\sum_{i_n=1}^{K}\bigg\langle f_n,e_{i_1}\otimes\cdots\otimes e_{i_n}\bigg\rangle_{\mathbb \mathbb L^2(\mathbb R^n)} e_{i_1}\otimes\cdots\otimes e_{i_n}(\bullet),
	\end{align*}
	where $f_n^K$ and $f_n$ are defined by (\ref{kernel1}) and (\ref{kernels}) respectively.
	
For the sake of simplicity we consider the case with $n=2$, the general case does not present particular difficulties besides the more complex notation.
In this case (\ref{kernel1}) takes the form:
	\begin{align*}
		f_2^K(t,x,\bullet)&=\int_0^t\int_0^{s_2} \sum_{i_1=1}^K\sum_{i_2=1}^Ke_{i_1}\otimes e_{i_2}(\bullet)\\
		&\times \int_{\mathbb R^3}p_{t-s_2}(x_2-x_1)p_{s_2-s_1}(x_1-x_0)p_{s_1}(x_0-x)u_0(x_2)e_{i_1}(x_1)e_{i_2}(x_0)d\mathbf{x}\;d\mathbf{s}.
	\end{align*}
	We will define a new set of variables according to the prescription:
	\begin{align*}
		r_1&:=t-s_2,\\
		r_2&:=t-s_1,\\
		y_0&:=x_2,\\
		y_1&:=x_1,\\
		y_2&:=x_0,
	\end{align*}
	then we can rewrite the expression above as
	\begin{align*}
		&\int_0^t\int_{r_1}^{t} \sum_{i_1=1}^K\sum_{i_2=1}^Ke_{i_1}\otimes e_{i_2}(\bullet) \int_{\mathbb R^3}p_{r_1}(y_1-y_0)p_{r_2-r_1}(y_2-y_1)p_{t-r_2}(x-y_2) u_0(y_0)e_{i_1}(y_1)e_{i_2}(y_2)d\mathbf{y}dr_2dr_1\\
		&=\int_0^t\int_{0}^{r_2} \sum_{i_1=1}^K\sum_{i_2=1}^Ke_{i_1}\otimes e_{i_2}(\bullet) \int_{\mathbb R^3}p_{t-r_2}(x-y_2)p_{r_2-r_1}(y_2-y_1)p_{r_1}(y_1-y_0)u_0(y_0)e_{i_1}(y_1)e_{i_2}(y_2)d\mathbf{y}\;d\mathbf r,
	\end{align*}
	which is equal to $(A_K^{\otimes 2}f_2(t,x))(\bullet)$.
From here it's easy to see that 
\begin{align*}
	u_{t,x}^K=\sum_{n=0}^{\infty} I_n(f_n^K(t,x))=\sum_{n=0}^{\infty} I_n(A_K^{\otimes n}f_n(t,x))=\Gamma(A_K)u(t,x),
\end{align*}
which proves the result.
\end{proof}

\subsection*{Step 3:}
It's straightforward to see that 
\begin{align*}
\left\|\sum_{n=0}^{\infty} I_n(f_n^K(t,x))-\sum_{n=0}^{\infty} I_n(f_n(t,x))\right\|_{\mathbb L^2(\mathbb P^W)}^2&=\left\|\sum_{n=0}^{\infty} I_n(f_n(t,x)-A^{\otimes n}_Kf_n(t,x))\right\|_{\mathbb L^2(\mathbb P^W)}^2\\
&=\sum_{n=0}^{\infty} n! \|f_n(t,x)-(A^{\otimes n}_Kf_n(t,x))\|_{\mathbb L^2(\mathbb R^n)}^{2}\to 0
\end{align*}
as $K\to\infty$. This together with the results obtained in \textit{Step 1} and the unicity of the $\mathbb L^2(\mathbb P^W)$ limit we conclude that
\begin{align*}
\mathbb{E}^B\left[u_0(B_t^x)\exp\big\{\Psi_{t,x}\big\}\right]=\sum_{n=0}^{\infty} I_n(f_n(t,x)).
\end{align*}

On the other hand from the propositions \ref{second quantization} and \ref{SQ and CE prop} we see that 
\begin{align*}
u_{t,x}^K=\mathbb{E}^W\left[u(t,x)|\sigma(Z_1,...,Z_K)\right],
\end{align*}
and we also notice that $\sigma(Z_1,...,Z_K)\uparrow \mathfrak B:=\sigma(\mathfrak{H}(W))$.
Then the martingale convergence theorem (e.g. theorem $35.6$ of \cite{billingsley2008probability}) gives us the $\mathbb P^W$-a.s. convergence

\section{Proof of theorem \ref{Moments}}

The convergence in $\mathbb L^p(\mathbb P^W),\; p\in [1,\infty)$ of $u_{t,x}^K$ to the solution of $u_{t,x}$ implies that for any $q\in \mathbb N$ the $q$-th moment of $u_{t,x}^K$ converges to that of the solution.
\begin{align*}
	\mathbb{E}^W\left[\left(u_{t,x}^K\right)^q\right]&=\mathbb{E}^W\left[\prod_{i=1}^q\mathbb E^B\left[u_0(B_t^{(i)}+x)\exp\big\{\Psi_{t,x}^{K,(i)}\big\}\right]\right]\\
	&=\mathbb{E}^B\left[\left(\prod_{i=1}^q u_0(B_t^{(i)}+x)\right)\mathbb E^W\left[\exp\bigg\{\sum_{i=1}^{q}\Psi_{t,x}^{K,(i)}\bigg\}\bigg|\mathcal F_T^B\right]\right]\\
	&=\mathbb{E}^B\bigg[\left(\prod_{i=1}^q u_0(B_t^{(i)}+x)\right)\mathbb E^W\left[\exp\bigg\{\int_{\mathbb R}\sum_{i=1}^{q}\sum_{k=1}^{K}\left(\int_0^te_k(B_s^{(i)}+x)ds\right)e_k(y) dW_y\bigg\}\bigg|\mathcal F_T^B\right]\\
	&\times \exp\bigg\{-\frac{1}{2}\sum_{i=1}^{q}\left(\int_0^t e_k(B_s^{(i)}+x)ds\right)^2\bigg\}\bigg],
\end{align*}

using the fact that conditional on $\mathcal G_T^B$ the stochastic integral appearing in the exponential is a centered Gaussian random variable we can see that the latter equals

\begin{align*}
	&=\mathbb{E}^B\bigg[\left(\prod_{i=1}^qu_0(B_t^{(i)}+x)\right) \exp\bigg\{\left\|\sum_{i=1}^{q}\sum_{k=1}^{K}\left(\int_0^te_k(B_s^{(i)}+x)ds\right)e_k(\bullet)\right\|_{\mathbb \mathbb L^2(\mathbb R)}^2\bigg\}\\
	&\times \exp\bigg\{-\frac{1}{2}\sum_{i=1}^{q}\left(\int_0^t e_k(B_s^{(i)}+x)ds\right)^2\bigg\}\bigg]\\
	&=\mathbb{E}^B\left[\left(\prod_{i=1}^qu_0(B_t^{(i)}+x)\right)\exp\bigg\{\sum_{i<j}^q\sum_{k=1}^{K}\left(\int_0^t e_k(B_s^{(i)}+x)ds\right)\left(\int_0^t e_k(B_r^{(j)}+x)dr\right)\bigg\}\right].
\end{align*}
Now we must take the limit for $K\to \infty$ (we can see that the exponential function is dominated by $\exp\{q\max_{1\leq i\leq q}\int_{\mathbb R}|L_a^{(i)}(t)|^2da\}$ which is integrable) yielding 

\begin{align}
	\mathbb{E}^W\left[\left(u_{t,x}\right)^q\right]=\lim_{K\to\infty} \mathbb{E}^W\left[\left(u_{t,x}^K\right)^q\right]=\mathbb{E}^B\left[\left(\prod_{i=1}^qu_0(B_t^{(i)}+x)\right)\exp\bigg\{\sum_{i<j}^{q}\int_0^t\int_0^t \delta_0(B_s^{(i)}-B_r^{(j)})dsdr\bigg\}\right].
\end{align}

\section{Appendix A: Local time}\label{local time}
	Consider the Brownian local time of a one dimensional Brownian motion $\{B_t^x\}_{t\in [0,T]}$ starting at $x\in\mathbb R$, at level $a\in\mathbb R$ and time $t\in [0,T]$:
\begin{align*}
	L_a^x(t)=\int_0^t\delta_a(B_s^x)ds,
\end{align*}
and notice that the latter can be seen as the usual Brownian local time $L_{a-x}(t)$.

It's known (e.g. the proof of proposition XIII-2.1. of \cite{revuz2013continuous}) that for a fixed $t$ the map $\mathbb R\ni a\mapsto L_a(t)$ is a.s. continuous and has compact support,  hence it follows that 
\begin{align*}
	\alpha_t=\int_{\mathbb R} |L_a(t)|^2 da< \infty, a.s.,
\end{align*}
this together with the invariance of Lebesgue measure implies that $a\mapsto L_a^x(t)$ belongs to $\mathbb \mathbb L^2(\mathbb R)$ almost surely.

Then the following Fourier-like series expansion holds a.s. 
\begin{align*}
	L_a^x(t)&=\sum_{j=1}^{\infty} \left(\int_{\mathbb R} L_y^x(t)e_j(y) dy\right) e_j(a)\\
	&=\sum_{j=1}^{\infty} \left(\int_0^t e_j(B_s^x)ds\right)e_j(a),
\end{align*}
where in the last equality we've used the occupation time formula.

By the Parseval's identity we have:
\begin{align}\label{local time parseval}
	\sum_{j=1}^{\infty} \left(\int_0^t e_j(B_s^x)ds\right)^2= \int_{-\infty}^{\infty}  |L_a^x(t)|^2 da= \int_{-\infty}^{\infty}  |L_{a}(t)|^2 da <\infty\; a.s.
\end{align}

\section{Appendix B: Second quantization and Conditional expectation}\label{SQ and CE}

Let $(\Omega,\mathcal A,\mathbb P^W)$ be a probability space then it's well know that if $X\in \mathbb L^2(\Omega,\mathcal A,\mathbb P^W)$ and $\mathcal G\subset\mathcal A$ is a sub-sigma-algebra, the conditional expectation $\mathbb E\left[X|\mathcal G\right]$
can be seen as the orthogonal projection of $X$ on $\mathbb L^2(\Omega,\mathcal G,\mathbb P^W)$.
In this appendix we will show an analogous property of the second quantization operator.

\begin{proposition}\label{SQ and CE prop}
	Let $A_K$ be the projection operator of definition \ref{projection} then the second quantization operator $\Gamma(A_K)$ coincides with the conditional expectation $\mathbb E[\bullet|\sigma(Z_1,...,Z_K)]$ where $\sigma(Z_1,...,Z_K)$ is the sigma algebra  generated by the family of i.i.d Gaussian random variables $(Z_1,...,Z_K)$.
\end{proposition}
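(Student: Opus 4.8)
The plan is to establish the operator identity $\Gamma(A_K)=\mathbb{E}[\,\cdot\,|\sigma(Z_1,\dots,Z_K)]$ by verifying it on a total family of random variables and then extending by continuity. Both maps are bounded linear operators on $\mathbb{L}^2(\mathbb{P}^W)$: the conditional expectation is the orthogonal projection onto $\mathbb{L}^2(\Omega,\sigma(Z_1,\dots,Z_K),\mathbb{P}^W)$, hence a contraction, while $\Gamma(A_K)$ is a contraction because $A_K$ is an orthogonal projection (in particular a contraction) on $\mathbb{L}^2(\mathbb{R})$. Since the Wick exponentials $\{\mathcal{E}^f:f\in\mathbb{L}^2(\mathbb{R})\}$ are total in $\mathbb{L}^2(\mathbb{P}^W)$, it suffices to show that the two operators send $\mathcal{E}^f$ to the same element for every $f\in\mathbb{L}^2(\mathbb{R})$.

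For the left-hand side I would use the standard chaos expansion $\mathcal{E}^f=\sum_{n=0}^\infty\frac{1}{n!}I_n(f^{\otimes n})$. Because $A_K^{\otimes n}$ acts on simple tensors factorwise, we have $A_K^{\otimes n}f^{\otimes n}=(A_K f)^{\otimes n}$, so by the very definition of the second quantization
\begin{align*}
\Gamma(A_K)\mathcal{E}^f=\sum_{n=0}^\infty\frac{1}{n!}I_n\big((A_K f)^{\otimes n}\big)=\mathcal{E}^{A_K f},
\end{align*}
where $A_K f=\sum_{j=1}^K\langle f,e_j\rangle e_j$.

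For the right-hand side I would split $f$ orthogonally as $f=A_K f+f^\perp$ with $f^\perp=f-A_K f\perp\mathrm{span}\{e_1,\dots,e_K\}$. Using $\|f\|^2=\|A_K f\|^2+\|f^\perp\|^2$ and additivity of the Wiener integral, the Wick exponential factorizes as $\mathcal{E}^f=\mathcal{E}^{A_K f}\cdot\mathcal{E}^{f^\perp}$. The first factor is $\sigma(Z_1,\dots,Z_K)$-measurable since $I_1(A_K f)=\sum_{j=1}^K\langle f,e_j\rangle Z_j$, while the second is independent of $\sigma(Z_1,\dots,Z_K)$: indeed $I_1(f^\perp)$ is jointly Gaussian with $(Z_1,\dots,Z_K)$ and $\mathbb{E}[I_1(f^\perp)Z_j]=\langle f^\perp,e_j\rangle=0$ for $j\le K$, so uncorrelatedness upgrades to independence. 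Taking conditional expectation and using $\mathbb{E}[\mathcal{E}^{f^\perp}]=1$,
\begin{align*}
\mathbb{E}\big[\mathcal{E}^f\,\big|\,\sigma(Z_1,\dots,Z_K)\big]=\mathcal{E}^{A_K f}\,\mathbb{E}[\mathcal{E}^{f^\perp}]=\mathcal{E}^{A_K f}.
\end{align*}
Comparing the two displays yields $\Gamma(A_K)\mathcal{E}^f=\mathbb{E}[\mathcal{E}^f|\sigma(Z_1,\dots,Z_K)]$ for all $f$, and the continuity of both operators together with totality of the exponentials closes the argument.

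The computations are routine once the set-up is fixed; the only points needing care are the reduction to the total family (which rests on $\mathbb{L}^2$-continuity of both operators) and the independence of $\mathcal{E}^{f^\perp}$ from $\sigma(Z_1,\dots,Z_K)$. I expect the latter to be the conceptual heart of the proof, since it is precisely the orthogonality $A_K f^\perp=0$ that makes the first-chaos projection $A_K$ align exactly with the conditioning $\sigma$-algebra.
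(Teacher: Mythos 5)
Your proof is correct, but it takes a genuinely different route from the one in the paper. The paper works entirely in the Cameron--Martin/Hermite basis: it expands a general $X=\sum_{\alpha}x_\alpha\mathcal H_\alpha$, computes $\mathbb E[X|\sigma_K]$ term by term using the independence and centering of the factors $H_{\alpha_j}(Z_j)$ for $j>K$, then computes $\Gamma(A_K)X$ through the multiple-integral representation and It\^o's identity $\mathcal H_\alpha=I_n\bigl(\bigodot_j e_j^{\odot\alpha_j}\bigr)$, and matches the two sums over $\mathcal J^K$. You instead test both operators on the total family of Wick exponentials: the identity $\Gamma(A_K)\mathcal E^f=\mathcal E^{A_Kf}$ follows from $A_K^{\otimes n}f^{\otimes n}=(A_Kf)^{\otimes n}$, while the factorization $\mathcal E^f=\mathcal E^{A_Kf}\cdot\mathcal E^{f^\perp}$ together with the Gaussian uncorrelated-implies-independent argument gives $\mathbb E[\mathcal E^f|\sigma_K]=\mathcal E^{A_Kf}$, and boundedness of both operators (conditional expectation as an orthogonal projection, $\Gamma(A_K)$ as the second quantization of a contraction) closes the density argument. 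Your version is shorter and avoids all multi-index bookkeeping; the conceptual point you isolate --- that the orthogonality $A_Kf^\perp=0$ at the level of the first chaos is exactly what aligns the projection with the conditioning $\sigma$-algebra --- is indeed the heart of the matter. What the paper's computation buys in exchange is the explicit diagonalization: it exhibits $\{\mathcal H_\alpha\}$ as a common eigenbasis on which both operators act as the indicator of $\alpha\in\mathcal J^K$, which makes the "orthogonal projection onto $\mathbb L^2(\Omega,\sigma_K,\mathbb P^W)$" picture completely concrete. Both arguments are complete; just make sure, if you write yours up, to state explicitly that two bounded operators agreeing on a total set agree on all of $\mathbb L^2(\mathbb P^W)$, and that the pull-out step in the conditional expectation is justified by the integrability of $\mathcal E^f$.
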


\begin{proof}
We consider again the complete probability space $(\Omega,\mathfrak B,\mathbb P^W)$ treated in the introduction.
Let $X\in \mathbb L^2(\mathbb P^W)$ then a result by Cameron and Martin \cite{cameron1947orthogonal} tells us that $X$ has a series expansion of the form
\begin{align*}
	X&=\sum_{\alpha\in \mathcal J} x_{\alpha}\mathcal H_{\alpha},\;\text{convergence in } \mathbb L^2(\mathbb P^W)
\end{align*}
where $\mathcal J$ is the space of of all
sequences $\alpha=(\alpha_1,\alpha_2,...)$ with elements $\alpha_i\in\mathbb N_0$ and with compact support and
\begin{align*}
\mathcal H_{\alpha}:=\prod_{j=1}^{\infty}H_{\alpha_j}(Z_j),
\end{align*}
where $H_n(\bullet)$ is the $n$-th Hermite polynomial and $Z_j:=\int_{\mathbb R}e_j(x)dW_x$, are known as ``generalized Hermite polynomials'' or ``Wick polynomials'' (e.g. \cite{holden2009stochastic}) and 
\begin{align*}
x_{\alpha}=\left(\prod_{j=1}^{\infty}\alpha_j!\right)^{-1}\mathbb E\left[X\mathcal H_{\alpha}\right].
\end{align*}

Now lets take the conditional expectation of $X$ given the sigma algebra $\sigma_K:=\sigma(Z_1,...,Z_K)$.
It's well known that we are allowed to interchange conditional expectation with an $\mathbb L^2$ convergent series, yielding 
\begin{align*}
	\mathbb E\left[X|\sigma_K\right]&=\sum_{\alpha\in \mathcal J} x_{\alpha}E\left[\mathcal H_{\alpha}|\sigma_K\right]\\
	&=\sum_{\alpha\in\mathcal J} x_{\alpha} \mathbb E\left[\prod_{j=1}^{\infty} H_{\alpha_j}\left(Z_j\right)\bigg|\sigma_K\right],
\end{align*}
and at this point we notice that the terms of the product involving  $Z_j$ for $j\in \{1,2,...,N\}$ are $\sigma_K$-measurable and hence can be pulled outside the conditional expectation,
\begin{align*}
	\mathbb E[X|\sigma_K]=\sum_{\alpha\in\mathcal J} x_{\alpha} \prod_{i=1}^{K} H_{\alpha_i}\left(Z_i\right)\mathbb E\left[\prod_{j=K+1}^{\infty} H_{\alpha_j}\left(Z_j\right)\bigg|\sigma_K\right],
\end{align*}
all the remaining terms are independent from $\sigma_K$, and mutually independent which implies that
\begin{align*}
	\mathbb E[X|\sigma_K]=\sum_{\alpha\in\mathcal J} x_{\alpha} \prod_{i=1}^{K} H_{\alpha_i}\left(Z_i\right)\prod_{j=K+1}^{\infty}\mathbb E\left[ H_{\alpha_j}\left(Z_j\right)\bigg|\sigma_K\right].
\end{align*}

Furthermore since the Hermite polynomials of a centered Gaussian random variables can be seen as its Wick  power i.e. $H_n(Z_j)= Z_j^{\diamond n}$ (\cite{janson1997gaussian} Theorem $3.19$), and since $\mathbb{E}(Z_j^{\diamond n})=\mathbb{E}(Z_j)^n=0$ we see that the only non-vanishing terms are those corresponding to the $\alpha$'s containing only positive values in the first $K$ entries (remember that $H_0(\cdot)\equiv 1$). 
This allows us to conclude that 
\begin{align}
	\mathbb E[X|\sigma_K]=\sum_{\alpha\in\mathcal J^K} x_{\alpha} \mathcal H_{\alpha}
\end{align}
where $\mathcal J^K:=\{\alpha\in\mathcal J:\alpha_i=0, \forall i>K\}$.

On the other hand we could write the Chaos decomposition in terms of multiple Wiener integrals, i.e.
\begin{align*}
	X=\sum_{n=0}^{\infty} I_n(f_n),
\end{align*}
where the kernel $f_n$ is a symmetric function in $\mathbb \mathbb L^2(\mathbb R)$.
Then by definition of the second quantization operator we have
\begin{align*}
	\Gamma(A_K)X&=\sum_{n=0}^{\infty} I_n(\Gamma(A_K)^{\otimes n} f_n)\\
	&=\sum_{n=0}^{\infty} \sum_{ \alpha\in\mathcal J^K_n}\bigg\langle f_n,\bigodot_{j=1}^{K}e_j^{\odot \alpha_j}\bigg\rangle_{\mathbb \mathbb L^2(\mathbb R)^{\odot n}} I_n\left(	 \bigodot_{j=1}^{K}e_j^{\odot \alpha_j} \right)\\
	&=\sum_{n=0}^{\infty}\sum_{ \alpha\in\mathcal J^K_n} x_{\alpha} \mathcal H_{\alpha}\\
	&=\sum_{n=0}^{\infty}\sum_{\alpha\in\mathcal J^K_n} x_{\alpha} \mathcal H_{\alpha}\\
	&=\sum_{\alpha\in\mathcal J^K} x_{\alpha} \mathcal H_{\alpha}\\
	&=\mathbb E[X|\sigma_K],
\end{align*}

where we have used the following identity proved by It\^o \cite{ito1951multiple}
\begin{align*}
	\mathcal H_{\alpha}= I_n\left(	 \bigodot_{j=1}^{K}e_j^{\odot \alpha_j} \right),
\end{align*}

and for $\alpha\in\mathcal J_n^K:=\{\alpha\in\mathcal J:|\alpha|=n, \alpha_i=0, \forall i>K\}$ we let
\begin{align*}
x_{\alpha}=\bigg\langle f_n,\bigodot_{j=1}^{K}e_j^{\odot \alpha_j}\bigg\rangle_{\mathbb \mathbb L^2(\mathbb R)^{\odot n}}=n!\bigg\langle f_n,\bigodot_{j=1}^{K}e_j^{\odot \alpha_j}\bigg\rangle_{\mathbb \mathbb L^2(\mathbb R^n)}.
\end{align*}
\end{proof}

\printbibliography

\end{document}